\newtheorem{theorem}{Theorem}[section]
\newtheorem{lemma}[theorem]{Lemma}
\newtheorem{corollary}[theorem]{Corollary}
\newtheorem{proposition}[theorem]{Proposition}
\theoremstyle{remark}
\newtheorem{remark}{Remark}[section]
\def\Z{\mathbb{Z}}
\def\R{\mathbb{R}}
\def\P{\mathbb{P}}
\renewcommand{\phi}{\varphi}
\renewcommand{\epsilon}{\varepsilon}
\renewcommand{\limsup}{\varlimsup}
\renewcommand{\liminf}{\varliminf}
\DeclareMathOperator{\sign}{sign}
\DeclareMathOperator{\argmin}{arg\ min}
\renewcommand{\d}{\text{\rm\,d}}
\newcommand{\cM}{\mathcal{M}}
\definecolor{mygray}{gray}{0.9}
\newcommand{\keyword}[1]{ \noindent {\footnotesize
             {\small \em Keywords and phrases.} {\sc #1} } }
\newcommand{\ams}[2]{  \noindent {\footnotesize
             {\small \em AMS {\rm 2000} subject classifications.
             {\rm {\sc #1}; {\sc #2}} } } }
 \title{The greedy walk on an inhomogeneous Poisson process}
 \author{Katja~Gabrysch and Erik~Th\"{o}rnblad}
 \date{\today}
 \address{Department of Mathematics, Uppsala University, Box 480, S-75106 Uppsala, Sweden.}
 \email{katja.gabrysch@math.uu.se, erik.thornblad@math.uu.se}
\begin{document}

\begin{abstract}
 The greedy walk is a deterministic walk that always moves from its current position to the nearest not yet visited point.
 In this paper we consider the greedy walk on an inhomogeneous Poisson point process on the real line. Our primary interest is whether the walk visits all points of the point process, and we determine sufficient and necessary conditions on the mean measure of the point process for this to happen. Moreover, we provide precise results on threshold functions for the property of visiting all points.

\vspace{2mm} 

\keyword{greedy walk; inhomogeneous Poisson point processes; threshold}

\ams{60K37}{60G55,60K25}

\end{abstract}

 \maketitle

\section{Introduction and main results}

Consider a simple point process $\Pi$ without accumulation points in a metric space $(E, d)$. We think of $\Pi$ either as
an integer-valued  measure or as a collection of points (the support of the measure). With the latter viewpoint in mind, we define the greedy walk on $\Pi$ as follows. 
Let $S_0\in E$ and $\Pi_0=\Pi$. 
Define, for $n\geq 0$,
\begin{align}
 S_{n+1} &= \argmin \{d(S_n,X)  :  X\in \Pi_n\}, \\
  \Pi_{n+1} &= \Pi_n\setminus \{S_{n+1}\}.
\end{align}
The set $\Pi_n$ denotes the set of unvisited points of $\Pi$ up until (and including) time $n$. We write $\Pi_{\infty}=\bigcap_{n=1}^{\infty}\Pi_n$ for the set of points that are never visited by the walk. Once the underlying environment $\Pi$ is fixed, the process $(S_n)_{n=0}^{\infty}$ is deterministic 
(except possibly for ties which need to be broken, but these will almost surely not occur in our setting).

The greedy walk has been studied before in the literature, with various choices of the underlying point process.
When $\Pi$ is a homogenous Poisson process on $\R$, one can show using a Borel--Cantelli--type argument that
$\Pi_{\infty}\neq \emptyset$ with probability $1$, that is, the greedy walk does not visit all points of 
the underlying point process. 
More precisely, the expected number of times the greedy walk starting from $0$ changes sign is $1/2$ \cite{Ga}.
Due to this, Rolla {\it et al.}\ \cite{RST14} considered a related problem, in which each point in the process can be visited either once, with probability $1-p$, or twice, with probability $p$. They show that $\Pi_{\infty}=\emptyset$, for any $0<p<1$, meaning that every point is eventually visited. 
Another modification of the greedy walk on $\R$ is studied by Foss {\it et al.}\ \cite{FRS}. 
The authors considered a dynamic version of the greedy walk, 
where the times and positions of new points arriving in the 
system are given by a Poisson process on the space-time half-plane.
They show that the greedy walk still diverges to infinity in one direction and does not visit all points.
In the survey paper \cite{BFL}, Bordenave {\it et al.} state several questions about the behaviour of the greedy walk on an inhomogeneous Poisson process in $\R^d$. We resolve here the problem for $d=1$. 

In this paper we define $\Pi$ to be an inhomogeneous Poisson process on $\R$ (with the Euclidean metric) given by some non-atomic mean measure $\mu$. 
For such a process, the number of points in disjoint measurable subsets of $\R$ are independent 
and
\begin{align}
 \mathbb{P}[\Pi(a,b)=k] = \frac{\mu(a,b)}{k!}e^{-\mu(a,b)}
\end{align}
for any $a<b$ and any $k\geq 0$, where, for any measurable $A\subseteq \R$, $\Pi(A)=\Pi A$ is the cardinality of the restriction of $\Pi$ to the set $A$.
This means that the number of points in any interval $(a,b)$ is distributed like $\text{Poi}(\mu(a,b))$. 
Sometimes, but not always, we will assume that the mean measure $\mu$ is absolutely continuous and given in terms 
of a measurable intensity function 
$\lambda : \R \to [0,\infty)$,  so that
\begin{align}
 \mu(A) = \int_A \lambda(x) \d x
\end{align}
for any measurable $A\subseteq \R$. 

Throughout we let $S_0=0$ (note that $0\notin \Pi$ with probability $1$), so that the walk starts in the origin.
The process $(S_n)_{n=0}^{\infty}$ will be referred to as {\tt GWIPP}. 
If we want to emphasise the underlying point process, the underlying mean measure, or the underlying intensity function, 
we write {\tt GWIPP}$(\Pi)$, {\tt GWIPP}$(\mu)$ or {\tt GWIPP}$(\lambda)$, respectively. 
We say that the walk jumped over $0$ if $\sign(S_{n+1})\neq \sign(S_n)$ for some $n$.
If the event $\{\sign(S_{n+1})\neq \sign(S_n) \text{ i.o.}\}$ occurs, 
then we say that {\tt GWIPP} is \emph{recurrent}. Otherwise we say that {\tt GWIPP} is \emph{transient}. 
This choice of notation is explained by viewing each jump over $0$ as a pseudo--visit at $0$.

To avoid certain degenerate cases, we will typically impose the following two conditions on the measure $\mu$.
\begin{enumerate}[(i)]
 \item $\mu(-\infty,0)=\mu(0,\infty)=\infty$. \label{meas:i}
 \item $\mu(A)<\infty$ for all bounded measurable $A\subseteq \R$. \label{meas:ii}
\end{enumerate}
Denote by $\cM$ the set of all measures on $\R$ which satisfy (\ref{meas:i}) and (\ref{meas:ii}).
Note that the first condition is equivalent to $\Pi(-\infty,0)=\Pi(0,\infty)=\infty$ with probability $1$; in Remark \ref{rem:notinf} we consider a case when this condition is not satisfied. The second condition is equivalent to $\Pi(A)<\infty$ with probability $1$, for any bounded measurable $A\subseteq \R$, which implies that there are no accumulation points of the process. Indeed, if a process has accumulation points, it is possible that the $\argmin$ in the  definition of the greedy walk is not well-defined.

If $\mu \in \cM$, then {\tt GWIPP}$(\mu)$ is recurrent if and only if $\Pi_{\infty}=\emptyset$, and {\tt GWIPP}$(\mu)$ is transient if and only if $\Pi_{\infty}\neq \emptyset$. Thus the dichotomy between recurrence and transience translates to a dichotomy between ``visits all points'' and ``does not visit all points''.  This stems from the fact that {\tt GWIPP} essentially can behave in two ways. Either the points of $\Pi$ are eventually dense enough that {\tt GWIPP} eventually gets stuck on either the positive or negative half-line and go towards $\infty$ or $-\infty$ accordingly (i.e.\ transient), or the points of $\Pi$ are sparse enough that there are infinitely many ``sufficiently long'' empty intervals on both 
half-lines, and that {\tt GWIPP} switches sign infinitely many times and thus visits all points of $\Pi$ (i.e.\ recurrent). Moreover, if {\tt GWIPP}$(\mu)$ is transient and $\mu$ symmetric around zero, then, by symmetry, {\tt GWIPP}$(\mu)$ goes to $+\infty$ or $-\infty$ with probability $1/2$ each.

The aim of this paper is to characterise (in terms of $\mu$ or $\lambda$) when {\tt GWIPP} is recurrent or transient. The following result does precisely this.

\begin{theorem}
\label{condition}
Let $\mu \in \cM$. Then {\tt GWIPP}$(\mu)$ is recurrent with probability 1 if 
$$\int_0^\infty \exp(-\mu(x,2x+R))\mu(\d x)=\infty \quad \text{and}\quad \int_{-\infty}^0 \exp(-\mu(2x-R,x))\mu(\d x)=\infty, $$
for all $R\geq 0$. If either integral is finite for some $R\geq 0$, then {\tt GWIPP}$(\mu)$ is transient with probability $1$.

Let $\lambda\in \cM$. Then {\tt GWIPP}$(\lambda)$ is recurrent with probability 1 if 
$$\int_0^\infty \!\! \exp\left(-\!\int_{x}^{2x+R}\!\!\!\! \lambda(t) \d t \right)\lambda(x) \d x=\infty \quad \text{and} \quad \int_{-\infty}^{0}\! \!\!\!\exp\left(-\!\int_{2x-R}^x\!\!\lambda(t) \d t \right)\lambda(x) \d x=\infty,$$
for all $R\geq 0$. If either integral is finite for some $R\geq 0$, then  {\tt GWIPP}$(\lambda)$ is transient with probability $1$. 
\end{theorem}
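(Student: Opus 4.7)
The plan is to recast the recurrence/transience of GWIPP as the almost-sure finiteness or infinity of two counting variables whose expectations are exactly the integrals appearing in the statement. Define $N_R^+ := \#\{x\in\Pi\cap(0,\infty):\Pi(x,2x+R)=0\}$ and $N_R^- := \#\{x\in\Pi\cap(-\infty,0):\Pi(2x-R,x)=0\}$. By the Slivnyak--Mecke formula together with the Poisson property, $\E[N_R^+]=\int_0^\infty e^{-\mu(x,2x+R)}\,\mu(\d x)$, and analogously for $N_R^-$, so the four integrals of the theorem are exactly $\E[N_R^\pm]$. If an integral is finite then $N_R^\pm<\infty$ a.s.\ by Markov's inequality. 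If it is infinite, the event $\{N_R^+=\infty\}$ is a tail event for the family $(\sigma(\Pi\cap[a,\infty)))_{a\geq 0}$ because condition (ii) implies that only finitely many points of $\Pi$ lie in $[0,a]$; the independence of $\Pi$ on disjoint sets makes this tail $\sigma$-algebra trivial, so $\P(N_R^+=\infty)\in\{0,1\}$ and the divergent expectation forces the probability to be $1$. Since $R\mapsto N_R^\pm$ is non-increasing, a countable check ($R\in\N$) upgrades ``a.s.\ for each $R$'' to ``a.s.\ for all $R\geq 0$ simultaneously''.

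The link to the walk is as follows. List the positive points of $\Pi$ as $0<\xi_1<\xi_2<\cdots$ and the negative ones as $0>-\eta_1>-\eta_2>\cdots$, with $\xi_0=\eta_0=0$. As noted in the introduction, at each time the visited set equals $\Pi\cap[-\eta_j,\xi_i]$ for some $i,j\geq 0$ and the walk sits at one of the endpoints. In state $R$ at $\xi_i$ with left endpoint $-\eta_j$, the walk stays on the right at the next step unless $\xi_{i+1}-\xi_i>\xi_i+\eta_{j+1}$, equivalently $\Pi(\xi_i,2\xi_i+\eta_{j+1})=0$, in which case it jumps to $-\eta_{j+1}$. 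Thus a right-to-left jump occurs at $\xi_i$ exactly when $\xi_i$ is one of the points contributing to $N_{\eta_{j+1}}^+$; the mirror statement holds for state $L$ and $N_R^-$.

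For recurrence, assume both integrals diverge for all $R\geq 0$, so a.s.\ $N_R^+=N_R^-=\infty$ for every $R\geq 0$. Whenever the walk is in state $R$ at $\xi_i$ with left index $j$, apply the a.s.\ statement with $R=\eta_{j+1}$: there are infinitely many $k$ with $\xi_{k+1}>2\xi_k+\eta_{j+1}$, and since the walk visits $\xi_i,\xi_{i+1},\ldots$ in order while in state $R$, it must jump over $0$ at some $k\geq i$. The symmetric argument in state $L$ shows that GWIPP changes sign infinitely often. For transience, suppose $\int_0^\infty e^{-\mu(x,2x+R_0)}\,\mu(\d x)<\infty$ for some $R_0$, so that $N_{R_0}^+<\infty$ a.s. If GWIPP were recurrent, the walk would make infinitely many left-to-right jumps, so the index $j$ of the current leftmost visited extreme would tend to $\infty$, and hence $\eta_{j+1}\uparrow\infty$ along the sequence of right-to-left jumps by condition (i). Eventually $\eta_{j+1}\geq R_0$, and then every subsequent right-to-left jump produces a distinct Poisson point $\xi$ with $\Pi(\xi,2\xi+R_0)=0$, contradicting $N_{R_0}^+<\infty$. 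The case of a finite left integral is symmetric, and the intensity-version follows by writing $\mu(\d x)=\lambda(x)\,\d x$.

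The main technical subtlety is the tail 0-1 law together with the exchange of quantifiers: we need $N_R^\pm=\infty$ a.s.\ for all $R\geq 0$ simultaneously in order to apply it to the random value $R=\eta_{j+1}$ seen dynamically by the walk. Monotonicity of $N_R^\pm$ in $R$ reduces this to a countable check over $\N$, but the filtration and tail event must be set up carefully to legitimately invoke Kolmogorov's 0-1 law for the Poisson process.
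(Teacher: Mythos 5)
Your reduction of recurrence and transience to the occurrence of the events $\{\Pi(x,2x+R)=0\}$ is essentially the paper's Lemma \ref{lem:reciff}: your $N_R^{\pm}$ count exactly the events $A_k^R$ and $B_k^R$ used there, your bookkeeping of the visited set as $\Pi\cap[-\eta_j,\xi_i]$ matches the paper's argument, and your handling of the random threshold $R=\eta_{j+1}$ via monotonicity of $N_R^{\pm}$ in $R$ and a countable intersection over $R\in\N$ is sound. The transience direction is also correct and is a clean variant of the paper's: the Mecke formula identifies $\E[N_R^{\pm}]$ with the integrals in the statement, and Markov's inequality gives $N_{R_0}^{+}<\infty$ a.s.\ when the corresponding integral converges.

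The gap is in the recurrence direction. From $\E[N_R^+]=\infty$ together with triviality of the tail $\sigma$-algebra you conclude $\P[N_R^+=\infty]=1$, but infinite expectation does not imply that a nonnegative random variable is infinite with positive probability; the $0$--$1$ law only reduces the problem to showing $\P[N_R^+=\infty]>0$, and that is exactly the step that still needs an argument. (This is the classical failure of the second Borel--Cantelli lemma for dependent events: $\sum_k \P[A_k]=\infty$ alone never suffices, and here the events $\{\Pi(X_k,2X_k+R)=0\}$ are dependent because consecutive intervals $(X_k,2X_k+R)$ overlap.) To close the gap you must either run a second-moment (Paley--Zygmund or Kochen--Stone) estimate to get positive probability and then invoke your $0$--$1$ law, or do what the paper does: take the filtration $\mathcal{F}_k=\sigma(X_1,\dots,X_k)$, observe that $\P[A_k^R\mid\mathcal{F}_k]=\exp(-\mu(X_k,2X_k+R))$ is a deterministic function evaluated at the Poisson point $X_k$, apply Campbell's theorem to conclude that $\sum_k \exp(-\mu(X_k,2X_k+R))=\infty$ almost surely precisely when your integral diverges, and then use the extended (conditional) Borel--Cantelli lemma to deduce that $A_k^R$ occurs infinitely often almost surely. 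Without one of these ingredients the implication ``both integrals infinite for all $R$ implies recurrent'' is not established.
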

The proof of this, presented in Section \ref{sec:2}, is an application of Campbell's theorem (which allows one to determine whether random sums over Poisson processes are convergent or divergent) and the Borel--Cantelli lemmas.

We remark a few things. First, the second part of the theorem is an immediate consequence of the first. Second, this result also states that recurrence (or transience) is a zero-one event. Third, it is a straightforward consequence that taking $\lambda(t)=c$ for all $t\in \R$, i.e.\ taking $\Pi$ to be a homogenous Poisson process with rate $c$, results in {\tt GWIPP}$(\lambda)$ being transient. 
This is well-known and another proof appears in \cite{Ga}.

Take now two point processes $\Pi$ and $\Pi_0$, and assume {\tt GWIPP}$(\Pi)$ is transient. Consider $\Pi'=\Pi+\Pi_0$. 
Intuitively, adding more points to an already transient process only makes it ``more'' transient, 
since it will be more difficult to find long empty intervals which allow $(S_n)_{n=1}^{\infty}$ to change sign. 
Conversely, removing points from an already recurrent process makes it ``more'' recurrent.
Since recurrence (or transience) does not depend on the point process in any finite interval around $0$, as the following result shows,  it suffices to look at what happens far away from the origin. (Equivalently, the convergence or divergence of the integrals in Theorem \ref{condition} depends only on the tail behaviour.) The proof appears in Section \ref{sec:2}.

\begin{lemma}\label{lem:coupling}
Let $\mu,\mu' \in \cM$ and suppose there is some 
$K>0$ such that $\mu'(A)\geq \mu(A)$ for all measurable $A\subseteq (-\infty,-K)\cup (K,\infty)$. 
If {\tt GWIPP}$(\mu')$ is recurrent with probability $1$, then {\tt GWIPP}$(\mu)$ is recurrent with probability $1$.
\end{lemma}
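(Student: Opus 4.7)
My plan is to apply Theorem \ref{condition}, which characterises recurrence of {\tt GWIPP}$(\mu)$ by the divergence of $\int_0^\infty e^{-\mu(x,2x+R)}\mu(\d x)$ together with the symmetric integral on $(-\infty,0)$, for every $R\ge 0$; the same characterisation applies to $\mu'$. I focus on the positive integral; the negative case is symmetric. The first step is to observe that the divergence of the positive integral depends only on the tail of $\mu$: since the integrand is bounded by $1$ and $\mu(0,K)<\infty$, the contribution from $(0,K)$ is finite, and for $x>K$ the interval $(x,2x+R)\subseteq (K,\infty)$, so on the tail the integrand depends only on $\mu|_{(K,\infty)}$. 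The analogous statements hold for $\mu'$.

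The main comparison uses the hypothesis $\mu|_{(K,\infty)}\leq \mu'|_{(K,\infty)}$. Writing $\mu'=\mu+\nu$ on $(K,\infty)$ with $\nu\geq 0$, we have $e^{-\mu'(x,2x+R)}=e^{-\mu(x,2x+R)}e^{-\nu(x,2x+R)}$, and decomposing the outer measure $\mu'(\d x)=\mu(\d x)+\nu(\d x)$ yields
\begin{align*}
\int_K^\infty e^{-\mu'(x,2x+R)}\mu'(\d x) \leq \int_K^\infty e^{-\mu(x,2x+R)}\mu(\d x) + \int_K^\infty e^{-\nu(x,2x+R)}\nu(\d x),
\end{align*}
where each exponential factor is bounded by $1$ in the two summands respectively. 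By hypothesis the left-hand side is infinite for every $R$, so at least one term on the right is infinite for each $R$.

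The main obstacle is to show that the first term on the right (the $\mu$-integral, which is what we want) diverges for every $R$, rather than the divergence being concentrated in the auxiliary $\nu$-integral. I plan to handle this by a Poisson-process coupling: take $\Pi\subseteq \Pi'$ outside $[-K,K]$ via $\Pi'=\Pi\sqcup \Pi_\nu$, an independent superposition of $\Pi\sim\mathrm{PPP}(\mu)$ and $\Pi_\nu\sim\mathrm{PPP}(\nu)$. Slivnyak's formula interprets the three integrals as expected counts of ``right-isolated'' points of $\Pi'$, $\Pi$, and $\Pi_\nu$ respectively; since an empty $\Pi'$-interval is automatically an empty $\Pi$-interval, any point $X\in\Pi$ contributing to the $\mu'$-count contributes to the $\mu$-count as well. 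Combining this with the zero-one nature of recurrence from Theorem \ref{condition} and the monotonicity in $R$ of both integrals should force $\int_K^\infty e^{-\mu(x,2x+R)}\mu(\d x)=\infty$ for all $R$, completing the argument. The negative half-line is treated identically.
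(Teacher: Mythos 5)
Your reduction to Theorem \ref{condition} is a reasonable idea, and the preliminary steps (the tail reduction, writing $\mu'=\mu+\nu$ on $(K,\infty)$, and the inequality
\begin{align}
\int_K^\infty e^{-\mu'(x,2x+R)}\mu'(\d x) \leq \int_K^\infty e^{-\mu(x,2x+R)}\mu(\d x) + \int_K^\infty e^{-\nu(x,2x+R)}\nu(\d x)
\end{align}
) are all correct. But the argument stops exactly at the point you yourself flag as ``the main obstacle''. From the divergence of the left-hand side you can only conclude that \emph{at least one} of the two terms on the right diverges, and nothing you write rules out that it is only the $\nu$-term. The Slivnyak/Mecke reinterpretation is just a probabilistic restatement of the same inequality (isolated points of $\Pi'$ lying in $\Pi$ are isolated in $\Pi$; those lying in $\Pi_\nu$ are isolated in $\Pi_\nu$), so it adds no new information, and neither the zero--one law nor monotonicity in $R$ ``forces'' the $\mu$-term to diverge. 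The configuration you need to exclude is genuinely possible at the level of these three integrals: take $\mu=\Leb$ on $(0,\infty)$ (so $\int_0^\infty e^{-\mu(x,2x+R)}\mu(\d x)=\int_0^\infty e^{-(x+R)}\d x<\infty$) and let $\nu$ spread unit mass over each interval $[x_n,x_n+1]$ with $x_n=2^{2^n}$; then $\nu(x,2x+R)=\nu(x,x_n+1)$ for $x\in[x_n,x_n+1]$ and large $n$, so $\int_0^\infty e^{-\nu(x,2x+R)}\nu(\d x)\geq \sum_n(1-e^{-1})=\infty$ for every $R$, while the $\mu$-term is finite. So after your inequality you are stuck: to finish you would have to prove the contrapositive estimate, namely that $\int_K^\infty e^{-\mu(x,2x+R)}\mu(\d x)<\infty$ forces $\int_K^\infty e^{-\mu(x,2x+R)-\nu(x,2x+R)}\nu(\d x)<\infty$ for \emph{every} nonnegative $\nu$, and that requires a separate, nontrivial argument (e.g.\ a self-annihilation bound of the type $\int_a^b e^{-\nu(x,b)}\nu(\d x)\leq 1$ combined with a decomposition over the gaps of $\Pi$) that is nowhere in your proposal. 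Note also that the basic monotonicity is against you here: increasing the measure decreases the integrand but increases the measure you integrate against, which is precisely why the integral criterion does not compare in any obvious direction.

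The paper avoids this difficulty entirely: it does not go through Theorem \ref{condition} at all, but couples $\Pi\subseteq\Pi'$ outside $[-K,K]$ and argues pathwise. Assuming {\tt GWIPP}$(\Pi)$ transient with $S_n\to\infty$, it identifies the rightmost never-visited point $Y$ of $\Pi$ and shows that when the recurrent walk on $\Pi'$ attempts to jump over $0$ from a point $X_k'$ far to the right, the next point of $\Pi'$ to the right of $X_k'$ is at distance at most the gap between the two consecutive $\Pi$-points straddling $X_k'$, which the transience of $(S_n)$ makes smaller than the distance back to $Y$ --- a contradiction. If you want to salvage your route, you must supply the missing analytic estimate; otherwise the pathwise coupling is the cleaner way through.
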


Theorem \ref{condition} also facilitates the identification of threshold functions for recurrence, and it transpires that the iterated logarithms are useful in this context. Before we state our next result, we need some definitions. We define the ``power tower'' recursively by $a \uparrow \uparrow 0 := 1$ and  $a\uparrow \uparrow n := a^{a\uparrow\uparrow (n-1)}$ for any $a\in [0,\infty)$ and  $n\geq 1$. Let $\log$ be the ordinary natural logarithm. We define the iterated logarithm $\log^{(n)}$, for $n\geq 1$, to be the function defined recursively by
\begin{align}
 \log^{(1)}t := \begin{cases} \log t  & \text{ if } t> 1 \\ 0 & \text{ otherwise,} \end{cases}
\end{align}
and, for any $n\geq 2$, 
\begin{align}
\log^{(n)} t := \log^{(1)} \left(\log^{(n-1)} t \right).
\end{align}
Note that $\log^{(n)}t = 0$ for any $t\leq e \uparrow \uparrow (n-1)$.

\begin{proposition}\label{prop:thresholdregion}
 Let 
\begin{align}
\lambda(t) := \frac{1}{|t|\log 2}\sum_{i=2}^n a_i \log^{(i)}|t|.
\end{align}
where $n\in \{2,3,4,\dots\}$ and $a_i\geq 0$ for all $2\leq i \leq n$. Then {\tt GWIPP}$(\lambda)$ is transient with probability $1$ if 
\begin{itemize}
 \item $a_2 > 1$, or 
 \item $a_2=1,a_3>2 $, or 
 \item $a_2=1,a_3=2$, and there exists some $m\geq 4$ such that $a_4=1,a_5=1, \dots, a_m=1$ and $a_{m+1}>1$.
\end{itemize}
Otherwise, {\tt GWIPP}$(\lambda)$ is recurrent with probability $1$.
\end{proposition}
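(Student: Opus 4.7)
The plan is to apply Theorem~\ref{condition}. Since $\lambda$ is symmetric around $0$, the two integrals in the theorem coincide, and it suffices to determine, for each $R \ge 0$, the convergence of
\[
I_R := \int_{e\uparrow\uparrow(n-1)}^{\infty}\exp\!\left(-\int_x^{2x+R}\lambda(t)\,dt\right)\lambda(x)\,dx.
\]
The first step is to compute the inner integral asymptotically. Via the substitution $u=\log t$ (which turns an integral over $(x,2x)$ into one over $(\log x,\log x+\log 2)$) and the mean value theorem, one obtains, for each $i\ge 2$,
\[
\int_x^{2x}\frac{\log^{(i)}t}{t\,\log 2}\,dt = \log^{(i)}x + O\!\left(\tfrac{1}{\log x}\right),
\]
while $\int_{2x}^{2x+R}\lambda(t)\,dt = O(R\lambda(2x)) = o(1)$. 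Summing over $i$ yields
\[
\int_x^{2x+R}\lambda(t)\,dt = \sum_{i=2}^{n}a_i\log^{(i)}x + o(1) \qquad (x\to\infty).
\]

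Exponentiating and using $\exp(-\log^{(i)}x)=(\log^{(i-1)}x)^{-1}$, the exponential factor satisfies
\[
\exp\!\left(-\int_x^{2x+R}\lambda(t)\,dt\right)\;\sim\;\prod_{i=2}^{n}(\log^{(i-1)}x)^{-a_i}.
\]
The integrand of $I_R$ then decomposes into a sum of terms, one for each $i\in\{2,\ldots,n\}$, of the form $\dfrac{a_i\log^{(i)}x}{x\log 2}\prod_{j=2}^n(\log^{(j-1)}x)^{-a_j}$. The dominant term is $i=2$; absorbing the $\log\log x$ in its numerator into the $(\log\log x)^{-a_3}$ factor gives
\[
\frac{a_2}{x\,\log 2}\cdot\frac{1}{(\log x)^{a_2}(\log\log x)^{a_3-1}(\log^{(3)}x)^{a_4}\cdots(\log^{(n-1)}x)^{a_n}}.
\]
The terms with $i\ge 3$ are Bertrand integrals whose first exponent distinct from $1$ is $a_3\ge 2$ (in every subcase where the $i=2$ term itself converges), and so they are automatically integrable in the transient regime; in the recurrent regime all terms are nonnegative, so divergence of the $i=2$ term suffices.

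The final step is the classical Bertrand integrability criterion: an integral $\int^{\infty}\bigl(x\prod_{k\ge 1}(\log^{(k)}x)^{b_k}\bigr)^{-1}dx$ converges if and only if the first $b_k$ with $b_k\neq 1$ satisfies $b_k>1$. Applied to the exponents $(b_1,b_2,b_3,\ldots)=(a_2,\,a_3-1,\,a_4,\,a_5,\ldots)$ coming from the $i=2$ term, this yields convergence of $I_R$ (hence transience, by Theorem~\ref{condition}) in precisely the three scenarios listed in the proposition, and divergence of $I_R$ for every $R\ge 0$ (hence recurrence) in all remaining cases. The main obstacle is Step~2: establishing the asymptotic for $\int_x^{2x+R}\lambda$ with $o(1)$ accuracy so that exponentiation introduces only a bounded multiplicative distortion, together with the bookkeeping needed to verify that the non-leading terms of $\lambda(x)$ do not perturb the Bertrand dichotomy. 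Everything else reduces to standard manipulation of iterated logarithms.
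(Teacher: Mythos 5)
Your overall strategy is sound and is essentially the route the paper takes: reduce the integrals of Theorem~\ref{condition} to the Bertrand-type integral $\int \frac{\log f(x)}{x f(x)}\,dx$ with $f(x)=\prod_{i=2}^{n}(\log^{(i-1)}x)^{a_i}$, and read off convergence from the exponent sequence $(a_2, a_3-1, a_4,\dots,a_n)$. The paper gets there by citing its regular-variation machinery (Lemma~\ref{lem:uniconv}, Lemma~\ref{lem:regvar}, Corollary~\ref{cor:intcond}, which in particular dispose of the parameter $R$), whereas you inline the same estimate $\int_x^{2x+R}\lambda(t)\,dt=\sum_i a_i\log^{(i)}x+o(1)$ by hand; that computation is correct, and your exponent bookkeeping matches the three listed transience scenarios exactly.

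There is, however, one genuine gap: the case $a_2=0$. Your argument designates the $i=2$ term as dominant and, in the recurrent regime, concludes by saying that ``divergence of the $i=2$ term suffices.'' When $a_2=0$ that term is identically zero (it carries the prefactor $a_2$), so it neither dominates nor diverges, and the claim is vacuous; the numerator $\sum_{i}a_i\log^{(i)}x$ is then of order $\log^{(j)}x$ for the smallest $j$ with $a_j>0$, and the Bertrand analysis has to be redone with that term (the conclusion is still divergence, since the first exponent is $a_2=0<1$, but this is not what you wrote). The paper treats $a_2=0$ as a separate case, comparing with $\lambda'(t)=\lambda(t)+\tfrac{1}{2}\tfrac{\log^{(2)}|t|}{|t|\log 2}$ and invoking Lemma~\ref{lem:coupling}. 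Either fix is short, but as written your proof does not cover this case (nor the wholly degenerate situation where every $a_i=0$, for which $\lambda\notin\cM$ and some remark is needed).
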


One could also ask to what extent Proposition \ref{prop:thresholdregion} extends to the infinite case. The statement about transience remains true, but the final statement about recurrence does not, unless one introduces some (rather mild) restrictions on the growth rate of the sequence $(a_n)_{n=2}^{\infty}$. Consider
\begin{align}
  \lambda(t) = \frac{1}{|t|\log 2}\sum_{i=2}^{\infty} a_i \log^{(i)}|t|. \label{eq:infsum}
\end{align}
If $a_2=0$ and $a_n=(e\uparrow \uparrow n)$ for $n\geq 3$, then {\tt GWIPP}$(\lambda)$ is transient with probability $1$, even though $a_2=0$. However, if $a_2=0$ and $a_n=(2\uparrow \uparrow n)$ for $n\geq 3$, then {\tt GWIPP}$(\lambda)$ is recurrent with probability $1$.

The next result describes the threshold between transience and recurrence in even greater detail. In particular, it shows that taking $a_3=2$ and $a_2=1=a_4=a_5=\dots $ in \eqref{eq:infsum} means that {\tt GWIPP}$(\lambda)$ is recurrent with probability $1$.

\begin{proposition}\label{prop:slowerthanlog2}
 Let $a_3=2$ and $a_2=1=a_4=a_5=\dots $ and let $(b_n)_{n=1}^{\infty}$ be non-decreasing sequence satisfying $b_n=O(e\uparrow \uparrow (n-2))$, $g:(0,\infty)\to [1,\infty)$ be a non-decreasing slowly varying function satisfying $g(e \uparrow \uparrow n)=b_n$, and let
\begin{align}
\lambda(t) := \frac{1}{|t|\log 2}\left(\sum_{i=2}^{\infty} a_i \log^{(i)}|t|+\log^{(1)} g(|t|)\right).
\end{align}
If $\sum_{n=2}^{\infty}1/b_n=\infty$, then {\tt GWIPP}$(\lambda)$ is recurrent with probability $1$. If $\sum_{n=2}^{\infty}1/b_n<\infty$, then {\tt GWIPP}$(\lambda)$ is transient with probability $1$.
\end{proposition}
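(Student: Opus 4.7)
The plan is to apply Theorem \ref{condition}: since $\lambda$ is symmetric in $t$, the two integrals in the theorem coincide, so it suffices to determine when
\begin{align}
I(R) := \int_0^\infty \exp\bigl(-\mu(x,2x+R)\bigr)\lambda(x)\,\d x
\end{align}
converges for one (equivalently, any) $R\geq 0$. I will show $I(R)\asymp \sum_n 1/b_n$, from which the dichotomy follows.

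The first key step is an accurate estimate of $\mu(x,2x+R)$ as $x\to\infty$. Substituting $v=\log_2 t$ converts each term $\int_x^{2x+R}(\log^{(i)}t)/(t\log 2)\,\d t$ into $\int_{\log_2 x}^{\log_2(2x+R)}\log^{(i-1)}(v\log 2)\,\d v$; since the range of $v$ has length tending to $1$ and the integrand is slowly varying in $v$, this integral equals $\log^{(i)}x + O(1/\log x)$. For the $g$-term the uniform convergence theorem for slowly varying functions gives $\int_x^{2x+R}(\log g(t))/(t\log 2)\,\d t = \log g(x)+o(1)$. Summing,
\begin{align}
\mu(x,2x+R) = \log^{(2)}x + 2\log^{(3)}x + \sum_{i=4}^\infty \log^{(i)}x + \log g(x) + o(1),
\end{align}
so after exponentiation $\exp(-\mu(x,2x+R))$ factors as an explicit product times $1+o(1)$, and the parameter $R$ disappears.

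The second step is to partition $(0,\infty)$ into the tower intervals $I_n := [e\uparrow\uparrow n,\,e\uparrow\uparrow(n+1))$ and estimate the integrand on each. For $x\in I_n$ only the terms with $i\leq n+1$ of the sum above are non-zero, the last one $\log^{(n+1)}x$ lying in $[0,1)$; using $\exp(-a\log^{(i)}x) = (\log^{(i-1)}x)^{-a}$ one obtains
\begin{align}
\exp(-\mu(x,2x+R)) \asymp \frac{1}{\log x\cdot(\log^{(2)}x)^2\cdot\log^{(3)}x\cdots \log^{(n)}x\cdot g(x)}
\end{align}
uniformly on $I_n$ for $n$ large. The hypothesis $b_n=O(e\uparrow\uparrow(n-2))$ forces $\log g(x)=O(\log^{(2)}x)$ on $I_n$, and since $\log^{(2)}x$ dwarfs every other iterated-log term appearing in $\lambda$, the dominant contribution to $\lambda(x)$ is $\log^{(2)}x/(x\log 2)$; hence $\lambda(x)\exp(-\mu(x,2x+R)) \asymp 1/(x\log x\log^{(2)}x\cdots\log^{(n)}x\,g(x))$ uniformly on $I_n$.

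Finally, since the antiderivative of $1/(x\log x\cdots\log^{(n)}x)$ is $\log^{(n+1)}x$, and this function vanishes at $e\uparrow\uparrow n$ and equals $1$ at $e\uparrow\uparrow(n+1)$, the integral over $I_n$ without the $g$ factor is exactly $1$. Because $g$ is non-decreasing, $g(x)\in[b_n,b_{n+1}]$ on $I_n$, so $\int_{I_n}\lambda(x)\exp(-\mu(x,2x+R))\,\d x \asymp 1/b_n$ and summation yields $I(R)\asymp \sum_n 1/b_n$; Theorem \ref{condition} then delivers the two halves of the proposition. The principal technical hurdle is making the $\asymp$-estimates of the middle two paragraphs uniform in $n$, so that the implicit constants in front of $1/b_n$ do not spoil the convergence test; the hypothesis $b_n = O(e\uparrow\uparrow(n-2))$ is precisely what keeps $\log g(x)$ from becoming comparable to the leading $\log^{(2)}x$ term in $\lambda(x)$ and secures this uniformity.
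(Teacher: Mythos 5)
Your proposal is correct and follows essentially the same route as the paper: the paper first reduces to the convergence of $\int_0^\infty \frac{\log f(x)}{x f(x)}\,\d x$ via the uniform convergence theorem for slowly varying functions (Lemma \ref{lem:regvar}, which is exactly your first-step estimate of $\mu(x,2x+R)$), and then decomposes this integral over the tower intervals, sandwiches $g$ between $b_{n-1}$ and $b_n$ on each, and uses the antiderivative $\log^{(n)}x$ to get a contribution of order $1/b_n$ per interval. The only difference is cosmetic — you inline the slowly-varying estimate directly into Theorem \ref{condition} rather than citing the intermediate lemma.
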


The following result provides a useful tool for investigating the behaviour of a given intensity function. The idea behind the proof is essentially to find a suitable intensity function for comparison, and apply Lemma \ref{lem:coupling} and Proposition \ref{prop:thresholdregion}.

\begin{proposition}\label{thm:threshold}
 Let $\lambda\in \cM$. Let $a_3=2$ and $a_2=1=a_4=a_5=\dots $. If there exists some $n\geq 2$ such that
\begin{align}
 \liminf_{t\to \infty} \frac{t\lambda(t)\log 2 - \sum_{i=2}^{n-1}a_i\log^{(i)}t}{a_n\log^{(n)}t}>1 \quad \text{or} \quad  \liminf_{t\to -\infty} \frac{|t|\lambda(t)\log 2 - \sum_{i=2}^{n-1}a_i\log^{(i)}|t|}{a_n\log^{(n)}|t|}>1
\end{align}
then {\tt GWIPP}$(\lambda)$ is transient with probability $1$.
If there exists some $n\geq 2$ such that
\begin{align}
 \limsup_{t\to \infty} \frac{t\lambda(t)\log 2 - \sum_{i=2}^{n-1}a_i\log^{(i)}t}{a_n\log^{(n)}t}<1 \quad \text{and} \quad  \limsup_{t\to -\infty} \frac{|t|\lambda(t)\log 2 - \sum_{i=2}^{n-1}a_i\log^{(i)}|t|}{a_n\log^{(n)}|t|}<1
\end{align}
then {\tt GWIPP}$(\lambda)$ is recurrent with probability $1$.
\end{proposition}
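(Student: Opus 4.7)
The plan is to reduce both directions to the classification in Proposition \ref{prop:thresholdregion} via the monotone comparison in Lemma \ref{lem:coupling}. Let $a_2=1$, $a_3=2$ and $a_i=1$ for $i\geq 4$ be the coefficients appearing in the statement.

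For transience, suppose without loss of generality that the positive-side liminf inequality holds for some $n\geq 2$; then there exist $\delta>0$ and $T>0$ such that
\begin{align}
 t\lambda(t)\log 2 \;\geq\; \sum_{i=2}^{n-1}a_i\log^{(i)}t + (1+\delta)a_n\log^{(n)}t, \qquad t>T.
\end{align}
Construct an auxiliary intensity $\tilde\lambda$ by setting $\tilde\lambda(t):=\lambda(t)$ for $t<0$ and, on $(0,\infty)$, choosing one of the following symmetric-type profiles according to $n$: for $n=2$ take $\tilde\lambda(t)=\frac{(1+\delta)\log^{(2)}t}{t\log 2}$ (matching the case $a_2>1$ of Proposition \ref{prop:thresholdregion}); for $n=3$ take $\tilde\lambda(t)=\frac{1}{t\log 2}\bigl(\log^{(2)}t+(2+2\delta)\log^{(3)}t\bigr)$ (matching $a_3>2$); and for $n\geq 4$ take $\tilde\lambda(t)=\frac{1}{t\log 2}\bigl(\log^{(2)}t+2\log^{(3)}t+\sum_{i=4}^n\log^{(i)}t+2\log^{(n+1)}t\bigr)$ (matching case (iii) with $m=n$, i.e.\ streak $a_4=\cdots=a_n=1$ and break $a_{n+1}=2>1$). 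In the last case the required inequality $\tilde\lambda\leq\lambda$ on a tail reduces to $\delta a_n\log^{(n)}t\geq 2\log^{(n+1)}t$, which holds eventually since $\log^{(n)}t/\log^{(n+1)}t\to\infty$. The positive-half formula for $\tilde\lambda$ coincides with the restriction to $(0,\infty)$ of a symmetric intensity that Proposition \ref{prop:thresholdregion} declares transient, so by symmetry its positive-side integral in Theorem \ref{condition} is finite; hence Theorem \ref{condition} yields that {\tt GWIPP}$(\tilde\lambda)$ is transient. Since $\tilde\lambda\leq\lambda$ on both tails (equality on the negative side, construction on the positive side), the contrapositive of Lemma \ref{lem:coupling} gives transience of {\tt GWIPP}$(\lambda)$.

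For recurrence, the two-sided $\limsup<1$ hypothesis supplies $\delta>0$ and $T>0$ with
\begin{align}
 |t|\lambda(t)\log 2 \;\leq\; \sum_{i=2}^{n-1}a_i\log^{(i)}|t| + (1-\delta)a_n\log^{(n)}|t|, \qquad |t|>T.
\end{align}
Define $\tilde\lambda(t):=\frac{1}{|t|\log 2}\bigl(\sum_{i=2}^{n-1}a_i\log^{(i)}|t|+(1-\delta)a_n\log^{(n)}|t|\bigr)$ for $|t|>T$, extended arbitrarily near the origin so that $\tilde\lambda\in\cM$. This $\tilde\lambda$ is symmetric and its top coefficient $(1-\delta)a_n$ lies strictly below the critical value, with no higher-order terms; none of the three transience cases of Proposition \ref{prop:thresholdregion} applies (for $n=2$ we have $a_2=1-\delta<1$, and for $n\geq 3$ the truncated profile fails the streak-then-break pattern of case (iii) because its final coefficient is $<1$), so {\tt GWIPP}$(\tilde\lambda)$ is recurrent. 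Since $\tilde\lambda\geq\lambda$ on both tails, Lemma \ref{lem:coupling} directly transfers recurrence to {\tt GWIPP}$(\lambda)$.

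The main delicate step is the transience construction for $n\geq 4$: case (iii) of Proposition \ref{prop:thresholdregion} demands a tail term one order \emph{above} what the hypothesis controls directly, and one has to trade an arbitrarily small part of the $\delta a_n\log^{(n)}t$ slack for a $\log^{(n+1)}t$-term with coefficient strictly exceeding $1$. A secondary bookkeeping point is that the transience hypothesis is one-sided whereas Lemma \ref{lem:coupling} is two-sided; this is handled by keeping $\tilde\lambda=\lambda$ on the negative half-line, which is harmless because a single finite positive-side integral already certifies transience of $\tilde\lambda$ via Theorem \ref{condition}.
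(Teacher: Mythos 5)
Your proof is correct and follows essentially the same route as the paper: compare $\lambda$ on each tail with an explicit iterated-logarithm intensity and transfer transience/recurrence via Lemma \ref{lem:coupling} and Proposition \ref{prop:thresholdregion}, keeping the auxiliary intensity equal to $\lambda$ on the untouched half-line in the one-sided transience case. The only cosmetic difference is that for transience with $n\geq 4$ the paper simply inflates the top coefficient to $aa_n$ with $a>1$, whereas you keep $a_n$ and convert part of the $\delta$-slack into a $2\log^{(n+1)}t$ term; both constructions land in the same transience case of Proposition \ref{prop:thresholdregion}.
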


Proposition \ref{thm:threshold} does not answer what happens if, say,
\begin{align}
 \lim_{t\to \infty} \frac{t\lambda(t)\log 2 - \sum_{i=2}^{n-1}a_i\log^{(i)}t}{a_n\log^{(n)}t}=1 \quad \text{and} \quad  \lim_{t\to -\infty} \frac{|t|\lambda(t)\log 2 - \sum_{i=2}^{n-1}a_i\log^{(i)}|t|}{a_n\log^{(n)}|t|}=1
\end{align}
for all $n\geq 2$. As seen in Proposition \ref{prop:slowerthanlog2}, both recurrence and transience are possible in this case.

The remainder of this paper is outlined as follow. In Section \ref{sec:2} we prove mainly general results, including Theorem \ref{condition} and Lemma \ref{lem:coupling}. In Section \ref{sec:3} we concentrate on more concrete threshold results, i.e.\ Propositions \ref{prop:thresholdregion}--\ref{thm:threshold} along with related results.

\section{Proof of Theorem \ref{condition} and related results}\label{sec:2}
Throughout we will write $\Pi=\{X_i  :  i\in \Z\setminus \{0 \}\}$, assuming as we may that
\begin{align}
\cdots < X_{-2}<X_{-1}<0<X_1<X_2< \cdots.
\end{align}
For $k>0$, let
$$A_k^R=\{\Pi(X_k,2X_k+R)=0\}=\{d(X_k,-R)<d(X_k,X_{k+1})\}$$
and
$$B_k^R=\{\Pi(2X_{-k}-R,X_{-k})=0\}=\{d(X_{-k},R)<d(X_{-k},X_{-k-1})\}.$$
 
The following lemma describes the connections between these events and recurrence of {\tt GWIPP}.

\begin{lemma}\label{lem:reciff}

With probability 1,
\begin{align*}
\{{\tt GWIPP} \text{ recurrent}\}=\bigcap_{R\geq 0}\{A^R_k \text{ i.o.}, B^R_k \text{ i.o.} \}.
\end{align*}
\end{lemma}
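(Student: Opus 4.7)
The plan is to translate the GWIPP dynamics into the geometric events $A_k^R$ and $B_k^R$ via the greedy rule. I would first show by induction on $n$ that, almost surely, the visited set at time $n$ is a contiguous block $\{X_{-q_n},\dots,X_{-1},X_1,\dots,X_{p_n}\}$ around the origin and that $S_n\in\{X_{p_n},X_{-q_n}\}$. From $S_n=X_{p_n}$, the nearest unvisited points are $X_{p_n+1}$ and $X_{-q_n-1}$, so the walker jumps over $0$ iff $X_{p_n+1}-X_{p_n}>X_{p_n}-X_{-q_n-1}$, i.e.\ $X_{p_n+1}>2X_{p_n}+|X_{-q_n-1}|$; modulo the a.s.-null event of ties this is precisely the event $A_{p_n}^{|X_{-q_n-1}|}$, and a symmetric criterion involving $B_{q_n}^{|X_{p_n+1}|}$ governs the case $S_n=X_{-q_n}$. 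Each left-jump increments $q_n$ by $1$ and each right-jump increments $p_n$ by $1$; in particular, this block structure yields the dichotomy recalled in the introduction: recurrence is equivalent to both $p_n,q_n\to\infty$ (i.e.\ $\Pi_\infty=\emptyset$), and transience to one of them stabilizing.

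For the forward direction, assume GWIPP is recurrent, and let $(n_j)$ enumerate the times of jumps from positive to negative. By the dichotomy $q_{n_j}\to\infty$, hence $|X_{-q_{n_j}-1}|\to\infty$; combined with the monotonicity $A_k^{R'}\subseteq A_k^R$ for $R'\geq R$, the jump criterion gives $A_{p_{n_j}}^R$ for all sufficiently large $j$. Since $(p_{n_j})$ is strictly increasing this shows $A_k^R$ i.o., and the symmetric argument on jumps from negative to positive yields $B_k^R$ i.o.

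For the converse, assume $A_k^R,B_k^R$ i.o.\ for every $R\geq 0$, and suppose for contradiction GWIPP is transient; without loss of generality the walker stays on the positive side from some time $n_0$ onward. Then $q_n\equiv q^*:=q_{n_0}$ and the walker successively visits $X_{p_{n_0}},X_{p_{n_0}+1},\dots$ Setting $R^*:=|X_{-q^*-1}|<\infty$, the absence of further left-jumps forces $A_p^{R^*}$ to fail for every $p\geq p_{n_0}$, contradicting the hypothesis that $A_k^{R^*}$ occurs infinitely often. The symmetric case (walker stuck on the negative side) is handled by $B_k^{R^*}$ i.o. The uncountable intersection $\bigcap_{R\geq 0}$ poses no measurability issue since $\{A_k^R\text{ i.o.}\}$ is monotone in $R$ and equals its countable restriction to $R\in\mathbb{N}$.

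I expect the main delicate step to be the inductive block-structure argument above, which is intuitively clear but needs care in tracking which extreme $S_n$ sits at after each step and in ruling out ties almost surely; once that is in place, both implications follow rapidly from the geometric jump criterion combined with monotonicity in $R$.
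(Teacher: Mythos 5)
Your proof is correct and follows essentially the same route as the paper's: both rest on the contiguous-block structure of the visited set (so that $A_k^R$ encodes exactly whether the walker sitting at $X_k$ would jump left past $-R$), and both derive the contradiction with transience from the rightmost never-visited point (your $X_{-q^*-1}$ is the paper's $Y$). The only cosmetic differences are that you make the block-structure induction explicit and prove the direction ``recurrent $\Rightarrow A_k^R$ i.o.'' directly via the jump times together with monotonicity of $A_k^R$ in $R$, where the paper argues that implication by contradiction as well.
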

\proof
With probability 1, $\Pi(A)<\infty$ for any finite set $A$ and for all $n$ there is a unique point which is the closest unvisited point to $S_n$. 
On this event, the walk is well-defined and $\vert S_n \vert \rightarrow \infty$. 

Then, either
$\{\vert S_n\vert\rightarrow\infty \text{ but } S_nS_{n+1}<0 \text{ i.o.}\}$ occurs (i.e.\ {\tt GWIPP} is recurrent), or
$\{S_n\rightarrow \infty\}\cup\{S_n\rightarrow -\infty\}$ occurs (i.e.\ {\tt GWIPP} is transient).

Suppose first that $\{A^R_k \text{ i.o.}\}$ and $\{B^R_k \text{ i.o.}\}$ occur for all $R\geq 0$, and for contradiction that {\tt GWIPP} is transient.
Without loss of generality, we may assume $S_n\to \infty$ as $n\to \infty$. 
Then there exists $J$ such that $S_{n+1}>S_n$ for all $n>J$. 
Let $Y=\max \{X\in \Pi  :  X<  \min_{0\leq k\leq J}S_k \}$ be the rightmost point never visited (such a point exists because of the assumptions of transience and $S_n\to \infty$ as $n\to \infty$)
and choose $R$ such that $R>|Y|$. 
Note that $Y$ is the closest unvisited point to the left of $S_n$, when $n\geq J$.
Since, by assumption, $A^R_k$ occurs for some $k>J$, we have $d(X_k,Y)<d(X_k,-R)<d(X_k,X_{k+1})$.
Moreover, there exists $n$ such that $S_{n}=X_k$ and by the definition of the greedy walk $S_{n+1}=Y$, which is a contradiction. Hence {\tt GWIPP} recurrent.

For the other direction, 
assume that {\tt GWIPP} is recurrent, but, for contradiction, that $A_k^R$ (the argument being identical for $B_k^{R}$) occurs at most finitely many times for some $R\geq 0$.
Let $J=\max\{k\in\Z: X_k<-R\}$ and let $K=\max\{k>0: A_k^R \text{ occurs}\}$. 
Then for all $k>K$, $d(X_k,X_{k+1})<d(X_k,-R)$.
Since {\tt GWIPP} is recurrent, it visits all points of $\Pi$. In particular, there is a finite time $N$ after which all points in $(X_J,X_k]\subset [-R,X_K]$ have been visited.
But then, for all $n>N$ such that $S_n>0$ and $S_{n+1}<0$, we have $S_n=X_k$ for some $k>K$ and $d(X_k,-R)<d(X_k,X_{J})\leq d(X_k,S_{n+1})<d(X_k,X_{k+1})$. This contradicts $d(X_k,X_{k+1})<d(X_k,-R)$.

\qed

This characterisation suggests that the Borel--Cantelli lemmas will be useful. 
In particular, we use the extended Borel--Cantelli Lemma.

\begin{lemma}[Extended Borel--Cantelli lemma, {\cite[Corollary 6.20]{Ka}}]
\label{extended}
Let $\mathcal{F}_n$, $n\geq 0$, be a filtration and let $A_n\in\mathcal{F}_n$, $n\geq 1$.  Then, with probability 1,
\begin{align}
 \{A_n \text{ i.o.}\}=\left\{\sum_{n=1}^{\infty}\mathbb{P}[A_n~\vert~\mathcal{F}_{n-1}]=\infty\right\}.
\end{align}

\end{lemma}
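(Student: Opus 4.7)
The plan is to invoke the classical Doob–Neveu dichotomy for martingales with bounded increments. First, I would set
\[
M_n := \sum_{k=1}^n \bigl(\1_{A_k} - \P[A_k \mid \mathcal{F}_{k-1}]\bigr), \qquad S_n := \sum_{k=1}^n \1_{A_k}, \qquad T_n := \sum_{k=1}^n \P[A_k \mid \mathcal{F}_{k-1}],
\]
so that $M_n = S_n - T_n$. Then $(M_n)$ is an $(\mathcal{F}_n)$-martingale, and since each summand lies in $[-1,1]$, its increments are uniformly bounded by $1$. The processes $(S_n)$ and $(T_n)$ are non-decreasing, so they admit limits $S_\infty, T_\infty \in [0,\infty]$, and we have the tautology $\{A_n \text{ i.o.}\} = \{S_\infty = \infty\}$. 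The goal therefore reduces to showing $\{S_\infty = \infty\} = \{T_\infty = \infty\}$ almost surely.

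The central step, and the main obstacle, is the following dichotomy: almost surely, either $M_n$ converges to a finite limit, or $\limsup_n M_n = +\infty$ and $\liminf_n M_n = -\infty$. I would prove this via a stopping-time argument. For each $N \in \N$, set $\tau_N := \inf\{n \geq 1 : M_n < -N\}$. Because $|M_n - M_{n-1}| \leq 1$, the stopped martingale $(M_{n \wedge \tau_N})$ is bounded below by $-N-1$, so it has uniformly integrable negative part, and Doob's martingale convergence theorem produces an almost-sure finite limit for $M_{n \wedge \tau_N}$. On the event $\{\tau_N = \infty\} = \{\inf_n M_n \geq -N\}$ this forces $M_n$ itself to converge; letting $N \to \infty$ yields convergence of $M_n$ on $\{\liminf_n M_n > -\infty\}$. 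A symmetric argument applied to $-M_n$ handles $\{\limsup_n M_n < +\infty\}$, and together these give the dichotomy.

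Finally, I would translate the dichotomy into the desired equality. On the convergence alternative, $M_n = S_n - T_n$ has a finite limit and both $S_n, T_n$ are monotone, so either both are finite at infinity or both diverge to $+\infty$; hence $\{S_\infty = \infty\} = \{T_\infty = \infty\}$ on this event. On the oscillation alternative, both $S_\infty$ and $T_\infty$ must equal $+\infty$: if, say, $S_\infty < \infty$, then $T_n = S_n - M_n$ would inherit the oscillation of $M_n$, contradicting the monotonicity of $T_n$, and symmetrically if $T_\infty < \infty$. Combining the two cases yields $\{A_n \text{ i.o.}\} = \{\sum_n \P[A_n \mid \mathcal{F}_{n-1}] = \infty\}$ almost surely, which is the claim.
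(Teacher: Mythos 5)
Your proof is correct. Note that the paper does not prove this lemma at all: it is quoted verbatim from Kallenberg (Corollary 6.20) and used as a black box, so there is no in-paper argument to compare against. What you have written is a correct, self-contained proof along the standard lines: the decomposition of the counting process $S_n$ into its compensator $T_n$ plus the bounded-increment martingale $M_n$, the Doob--Neveu dichotomy for such martingales (convergence versus two-sided oscillation, established by stopping at the first passage below $-N$ and using convergence of martingales bounded from below), and the observation that on either alternative the monotone processes $S_n$ and $T_n$ must be finite or infinite together. This is essentially the proof given in Kallenberg's book, so you have reconstructed the cited argument rather than found a new one. One microscopic point of precision: in the oscillation case, the contradiction you derive when, say, $S_\infty<\infty$ is really with the nonnegativity of $T_n$ (equivalently, monotonicity together with $T_0=0$), since $\liminf_n T_n=S_\infty-\limsup_n M_n=-\infty$ is impossible for a nonnegative sequence; "contradicting the monotonicity of $T_n$" alone is slightly loose but the intended argument is clearly valid.
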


The convergence or divergence of the associated random series will be determined using Campbell's theorem for sums of non-negative measurable functions, which provides a zero-one law for the convergence of a random series.

\begin{theorem}[Campbell's theorem, {\cite[Section 3.2]{Kingman}}]
\label{Campbell}
 Let $\Pi$ be a Poisson process on $S$ with mean measure $\mu$ and let
$f:S\rightarrow [0,\infty]$ be a measurable function. Then the sum
$$\sum_{X\in\Pi}f(X)$$
is convergent with probability 1 if and only if
\begin{align}
 \label{convergence}
\int_S \min\{ f(x),1\}\mu(\d x)<\infty.
\end{align}
Moreover, the sum diverges with probability 1 if and only if the integral diverges.
\end{theorem}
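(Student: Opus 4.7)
The plan is to prove Campbell's theorem via the Laplace functional of the Poisson process, which packages the computation into a single identity and converts almost-sure convergence into a statement about an explicit integral. First I would reduce to the case of simple functions: write $f=\sum_{i} c_i \mathbf{1}_{A_i}$ with the $A_i$ disjoint of finite $\mu$-measure, so that $\sum_{X\in\Pi}f(X)=\sum_i c_i\,\Pi(A_i)$ is a linear combination of independent Poisson variables $\Pi(A_i)\sim \mathrm{Poi}(\mu(A_i))$. Computing the Laplace transform of a Poisson variable and using independence gives
\begin{align}
\mathbb{E}\Bigl[\exp\Bigl(-\sum_{X\in\Pi}f(X)\Bigr)\Bigr]=\prod_i \exp\!\bigl(\mu(A_i)(e^{-c_i}-1)\bigr)=\exp\!\Bigl(-\int_S (1-e^{-f(x)})\,\mu(\d x)\Bigr).
\end{align}
For a general measurable $f:S\to [0,\infty]$, I would approximate by an increasing sequence of simple functions and pass to the limit using monotone convergence on both sides.

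Next I would use this Laplace identity to characterise almost-sure convergence. Since $\sum_{X\in\Pi}f(X)<\infty$ a.s.\ is equivalent to $\exp\bigl(-\sum_{X\in\Pi}f(X)\bigr)>0$ a.s., and the expectation displayed above is strictly positive iff $\int_S(1-e^{-f})\,\d\mu<\infty$, this gives the first key reduction: the sum is a.s.\ finite iff this integral is finite. To replace $1-e^{-f}$ by $\min\{f,1\}$, I would use the elementary two-sided bound
\begin{align}
(1-e^{-1})\min\{u,1\}\leq 1-e^{-u}\leq \min\{u,1\}, \qquad u\geq 0,
\end{align}
which follows on $[0,1]$ from the concavity of $1-e^{-u}$ (chord inequality) and on $[1,\infty)$ from monotonicity. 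Hence $\int_S(1-e^{-f})\,\d\mu$ and $\int_S\min\{f,1\}\,\d\mu$ are simultaneously finite or infinite, proving the stated equivalence.

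Finally I would upgrade the a.s.\ statements to the required zero-one dichotomy. One clean way is to invoke the fact that $\sum_{X\in\Pi}f(X)$ is a measurable function of the Poisson random measure and note that for any partition of $S$ into countably many sets of finite $\mu$-measure, the contributions from disjoint blocks are independent; if the total integral is infinite, then either infinitely many blocks contribute at least some fixed positive amount with positive probability (forcing a.s.\ divergence by Borel--Cantelli on an independent sequence), or the partial integrals themselves diverge along the partition, which by the Laplace identity restricted to each block forces the partial sums to diverge a.s. Conversely, if the integral is finite, a dominated/monotone passage through the block decomposition shows the sum is a.s.\ finite.

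The step I expect to be trickiest is the extension of the Laplace identity from simple $f$ to general measurable $f$ possibly taking the value $+\infty$: one must handle the set $\{f=\infty\}$ separately (on which $\sum f(X)=\infty$ as soon as $\Pi\{f=\infty\}\geq 1$, an event of probability $1-e^{-\mu\{f=\infty\}}$) and then treat the finite part via monotone approximation. The rest of the proof is just careful bookkeeping of the bound $(1-e^{-1})\min\{u,1\}\leq 1-e^{-u}\leq \min\{u,1\}$ and the independence properties of $\Pi$ on disjoint measurable sets.
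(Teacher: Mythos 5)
The paper itself offers no proof of this theorem --- it is quoted directly from Kingman --- and your Laplace-functional route is precisely the standard argument in that reference. The core ingredients are correct: the identity $\mathbb{E}\bigl[\exp\bigl(-\sum_{X\in\Pi}f(X)\bigr)\bigr]=\exp\bigl(-\int_S(1-e^{-f(x)})\,\mu(\d x)\bigr)$ for simple $f$, its extension by monotone approximation, and the two-sided bound $(1-e^{-1})\min\{u,1\}\leq 1-e^{-u}\leq\min\{u,1\}$ all check out.

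There is, however, one genuine gap, in the direction ``integral finite $\Rightarrow$ sum a.s.\ finite''. Writing $\Sigma=\sum_{X\in\Pi}f(X)$, it is true that $\Sigma<\infty$ a.s.\ is equivalent to $e^{-\Sigma}>0$ a.s., but strict positivity of the \emph{expectation} $\mathbb{E}[e^{-\Sigma}]$ only yields $\mathbb{P}(\Sigma<\infty)>0$, not $=1$: a variable equal to $+\infty$ with probability $1/2$ and to $0$ otherwise has $\mathbb{E}[e^{-\Sigma}]=1/2>0$. Your later ``dominated/monotone passage through the block decomposition'' does not close this; it is an assertion, not an argument. Two standard fixes: (a) let $t\downarrow 0$ in $\mathbb{E}[e^{-t\Sigma}]=\exp\bigl(-\int_S(1-e^{-tf})\,\d\mu\bigr)$; since $1-e^{-tf}\leq 1-e^{-f}$ for $t\leq 1$ and $1-e^{-tf}\to\mathbbm{1}_{\{f=\infty\}}$ pointwise, dominated convergence gives $\mathbb{E}[e^{-t\Sigma}]\to 1$ when the integral is finite and $\mu\{f=\infty\}=0$, whence $\mathbb{P}(\Sigma<\infty)=1$; or (b) note that for a partition of $S$ into countably many sets of finite measure, $\{\Sigma<\infty\}$ is a tail event of the independent block sums, so Kolmogorov's zero--one law upgrades $\mathbb{P}(\Sigma<\infty)>0$ to $1$. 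Relatedly, the set $\{f=\infty\}$ is not merely a technicality to be ``handled separately'': if $0<\mu\{f=\infty\}<\infty$ then $\int_S\min\{f,1\}\,\mu(\d x)$ can be finite while $\mathbb{P}(\Sigma=\infty)=1-e^{-\mu\{f=\infty\}}\in(0,1)$, so the stated dichotomy literally fails; one must assume $\mu\{f=\infty\}=0$ (harmless here, since the paper applies the theorem with $f(x)=\exp(-\mu(x,2x+R))\leq 1$).
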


We are now ready to prove Theorem \ref{condition}. 

\proof [Proof of Theorem \ref{condition}]
From Lemma \ref{lem:reciff} it follows that the sufficient and necessary conditions implying
that the {\tt GWIPP} is recurrent with probability 1,
are the same as those implying that $\{A_k^R \text{ i.o.}\}$ and $\{B_k^R \text{ i.o.}\}$ occur with probability 1 for all $R\geq 0$.

Let $\mathcal{F}_k=\sigma(X_1,X_2,\dots,X_k)$.
Then $A_k^{R}\in \mathcal{F}_{k+1}$ for any $R\geq 0$, and
\begin{align*}
 \mathbb{P}[A_k^R~\vert~\mathcal{F}_k]= \mathbb{P}[\Pi(X_k,2X_k+R)=0~\vert~\mathcal{F}_k]
=\exp (-\mu(X_k,2X_k+R)),
\end{align*}
where the final equality holds since $X_k\in \mathcal{F}_k$, $\Pi\cap{(X_k,\infty)}$ is independent of $\mathcal{F}_k$ 
and the number of points in a measurable set $A\subseteq \R$ is distributed like $\text{Poi}(\mu(A))$.
Applying  Theorem \ref{Campbell} with $f(x)=\exp(-\mu(x,2x+R))$, we obtain
$$\sum_{k=1}^{\infty}\mathbb{P}[A_k^R~\vert~\mathcal{F}_k]=\sum_{k=1}^{\infty}\exp (-\mu(X_k,2X_k+R))=\infty $$
with probability $1$ if and only if
\begin{align}
 \int_0^\infty \exp(-\mu(x,2x+R))\mu(\d x)=\infty.
\end{align}
Moreover,  Lemma \ref{extended} implies that $\sum_{k=1}^{\infty}\mathbb{P}[A_k^R~\vert~\mathcal{F}_k]=\infty$ a.s.\
 if and only if  $\mathbb{P}[A_k^R \text{ i.o.}]=1$.
Thus, the integral above diverges if and only if $\mathbb{P}[A_k^R \text{ i.o.}]=1$.

Similarly, if the integral above converges, so does the sum 
$$\sum_{k=1}^{\infty}\mathbb{P}[A_k^R~\vert~\mathcal{F}_k]$$
with probability $1$, and then by Lemma \ref{extended}, the event $\{A_k^R \text{ i.o.}\}$ does not occur with probability $1$.

In the same way one can show that 
$\int_{-\infty}^0 \exp(-\mu(2x-R,x))\mu(\d x)=\infty$
if and only if $\mathbb{P}[B_k^R \text{ i.o.}]=1$; and, conversely, 
if the integral converges, then $\mathbb{P}[B_k^R \text{ i.o.}]=0$.

\qed

\begin{remark}
We lose no generality by assuming that the greedy walk on $\Pi$ starts from the origin, since recurrence/transience does not depend on the starting point.
One explanation of this is that the distribution of the points in any finite interval around the origin does not influence the behaviour of the greedy walk far away from the origin.
More precisely, suppose the walk starts from $a\in \R$, $a>0$ (one can argue similarly for $a<0$). Then, for any $R\geq 0$, one can show that the events 
$\{\Pi(X_k,2X_k-a+R)=0\}$ and $\{\Pi(2X_{-k}-a-R,X_{-k})=0\}$ occur for infinitely many $k$ if and only if $\{A_k^R \text{ i.o.}\}$ and $\{B_k^R \text{ i.o.}\}$ occur.
As we have seen in Lemma \ref{lem:reciff}, {\tt GWIPP}$(\Pi)$ is recurrent if these events occur.
\end{remark}

In the following remark we explore what happens if $\mu$ does not satisfy condition (\ref{meas:i}).

\begin{remark}\label{rem:notinf}
If $\mu(-\infty,0)<\infty$ and $\mu(0,\infty)=\infty$, then it is not true that {\tt GWIPP}$(\mu)$ is transient
 if and only if $\Pi_{\infty}\neq \emptyset$. 
 (This should be contrasted with the situation when $\mu \in \cM$.) 
 To see this, consider the intensity function $\lambda(t)=\mathbbm{1}_{(-M,\infty)}(t)$, where $M>0$ will be chosen later.
 It is clear that ${\tt GWIPP}(\lambda)$ is transient with probability $1$, for any $M$, but we will show that $M$ can be chosen so that $0<\mathbb{P}[\Pi_{\infty}\neq \emptyset]<1$. Note that 
\begin{align}
 \mathbb{P}[\Pi_{\infty}=\Pi_{\infty}\cap{(-M,0)}]=1. 
\end{align}
so we may consider $\Pi_{\infty}\cap{(-M,0)}$ instead of $\Pi_{\infty}$.

 Let $\lambda'(t)=1$ for all $-\infty<t<\infty$. Denote by $\Pi'$ the associated Poisson process and by $(S_n')_{n=1}^{\infty}$ the associated greedy walk. We can couple $\Pi$ and $\Pi'$ so that $\Pi=\Pi'\cap{(-M,\infty)}$. 
 By symmetry, we have
\begin{align}
 \mathbb{P}[S_n'\to \infty]=\frac{1}{2} =  \mathbb{P}[S_n'\to -\infty].
\end{align}
It holds that
\begin{align}
 \mathbb{P}[\Pi'_{\infty}\cap{(-M,0)}=\emptyset\  |\  S_n'\to -\infty] = 1,
\end{align}
and
\begin{align}
  \mathbb{P}[\Pi'_{\infty}\cap{(-M,0)}=\emptyset \  |  \ S_n'\to \infty] > 0.
\end{align}
By the coupling of $\Pi$ and $\Pi'$, it holds that $\Pi'_{\infty}\cap{(-M,0)}= \Pi_{\infty}\cap{(-M,0)}$ almost surely, whence
\begin{align}
  \mathbb{P}[\Pi_{\infty}\cap{(-M,0)}=\emptyset] & = \mathbb{P}[ \Pi'_{\infty}\cap{(-M,0)}=\emptyset] \\
& =  \mathbb{P}[\Pi'_{\infty}\cap{(-M,0)}=\emptyset\  | \ S_n'\to -\infty]\mathbb{P}[S_n'\to -\infty] \\ & \qquad + \mathbb{P}[\Pi'_{\infty}\cap{(-M,0)}=\emptyset\  | \ S_n'\to \infty]\mathbb{P}[S_n'\to \infty]\\ &> \frac{1}{2}.
\end{align}
Therefore $\mathbb{P}[\Pi_{\infty}\cap{(-M,0)}\neq \emptyset]<\frac{1}{2}$.

For the other inequality, we first have
\begin{align}
 \mathbb{P}[\Pi_{\infty}\cap{(-M,0)} = \Pi\cap{(-M,0)}] 
&= \mathbb{P}[\Pi'_{\infty}\cap{(-M,0)} = \Pi'\cap{(-M,0)}] \\ 
& \geq  \mathbb{P}[\Pi'_{\infty}\cap{(-\infty,0)} =\Pi'\cap{(-\infty,0)}] \\ & = \prod_{n=1}^{\infty}\left(1-\frac{1}{2^n} \right) \approx 0.288\dots ,
\end{align}
where the final equality follows from \cite[Theorem 1]{Ga}. Now pick $M$ large enough that 
\begin{align}
 \mathbb{P}[\Pi\cap{(-M,0)}\neq \emptyset]>1-\prod_{n=1}^{\infty}\left(1-\frac{1}{2^n} \right).
\end{align}
Then
\begin{align}
 \mathbb{P}[\Pi_{\infty}\cap{(-M,0)} \neq \emptyset] 
 &\geq \mathbb{P}[\Pi_{\infty}\cap{(-M,0)} = \Pi\cap{(-M,0)},\ \Pi'_{\infty}\cap{(-M,0)} \neq \emptyset ] \\
 &\geq \mathbb{P}[\Pi_{\infty}\cap{(-M,0)} = \Pi\cap{(-M,0)}]+\mathbb{P}[\Pi'_{\infty}\cap{(-M,0)} \neq \emptyset ]-1\\
 & >0.
\end{align}

This implies that $0< \mathbb{P}[\Pi_{\infty} = \emptyset] <\frac{1}{2}$, even though {\tt GWIPP}$(\lambda)$ is transient with probability 1.
\end{remark}

A natural question is which conditions one needs to place on $\mu$ (or $\lambda$) so that $\{A_k^R \text{ i.o.}\}$ for all $R>0$ if and only if $\{A_k^0 \text{ i.o.}\}$. The reason why this is not an unreasonable demand is that the events $\Pi(X_k,2X_k+R)=0$ and $\Pi(X_k,2X_k)=0$ should not be too different for large $X_k$, since the length of the interval $(2X_k,2X_k+R)$ becomes negligible compared to the length of $(X_k,2X_k)$ in the limit. However, the following example shows that some extra conditions need to be placed, and that, in general, $\{A_k^0 \text{ i.o.}\}$ does not imply that $\{A_k^R \text{ i.o.}\}$ for all $R>0$. 

\begin{remark} \label{peaks}
Let
\begin{align}
\lambda(t)= \sum_{n=1}^{\infty}a_n \mathbbm{1}(2^n-2<t<2^n-1)
\end{align}
for some increasing sequence $(a_n)_{n=1}^{\infty}$. For $n=1,2,\dots$, denote by $C_n$ the event that $\Pi(2^n-2,2^n-1)=0$.

If $X$ equals the rightmost point in the interval $(2^n-2,2^n-1)$, 
then $\Pi(X,2X)=0$ almost surely.
This implies that $X$ is always closer to $0$ than to the leftmost point in $(2^{n+1}-2,2^{n+1}-1)$. 
Hence, $\{A_n^0 \text{ i.o.}\}$ occurs with probability 1. 

However, for $R=3$ we have $\{ A_n^3 \text{ i.o.}\} \subseteq \{C_n \text{ i.o.}\}$. 
Choose now the sequence $(a_n)_{n=1}^{\infty}$ such that
\begin{align}
 \sum_{n=1}^{\infty}\mathbb{P}[C_n] = \sum_{n=1}^{\infty}e^{-a_n} < \infty.
\end{align}
By the Borel--Cantelli lemma, the probability of $\{C_n \text{ i.o}\}$ is $0$, 
which implies that also $\P( A_n^3 \text{ i.o.})=0$. Therefore {\tt GWIPP}($\lambda)$ is transient even though $A_n^0$ occurs infinitely often with probability $1$.
\end{remark}

Denote by $\cM_b\subseteq \cM$ those measures $\mu\in \cM$ with the property that for any $R\geq 0$, 
there exists some constant $C=C(R)>0$, such that $\mu(x,x+R)<C$ and $\mu(-x-R,-x)<C$ for all $x\geq 0$.
\label{meas:iii} 
As the following lemma shows, this boundedness assumption disallows any examples of the type in Remark \ref{peaks}.

\begin{lemma}
 Let $\mu\in \cM_b$. Then {\tt GWIPP}$(\mu)$ is recurrent with probability $1$ if
\begin{align}
 \int_0^\infty \exp(-\mu(x,2x))\mu(\d x)=\infty \quad \text{and}\quad \int_{-\infty}^0 \exp(-\mu(2x,x))\mu(\d x)=\infty.
\end{align}
If either integral is finite, then {\tt GWIPP}$(\mu)$ is transient with probability $1$.
\end{lemma}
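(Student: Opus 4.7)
The strategy is to reduce the lemma directly to Theorem \ref{condition} by using the boundedness hypothesis of $\cM_b$ to show that, for each fixed $R\geq 0$, the $R$-dependent and $R$-independent integrands are comparable up to a multiplicative constant. Consequently, the integrals $\int_0^\infty \exp(-\mu(x,2x+R))\mu(\d x)$ for $R\geq 0$ all converge or all diverge together, and likewise on the negative half-line.

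First I would establish the two-sided comparison. For $x\geq 0$, write
\begin{align}
\mu(x,2x+R)=\mu(x,2x)+\mu(2x,2x+R).
\end{align}
Since $2x\geq 0$, the defining property of $\cM_b$ gives $\mu(2x,2x+R)<C(R)$, hence
\begin{align}
\mu(x,2x)\leq \mu(x,2x+R)\leq \mu(x,2x)+C(R),
\end{align}
and therefore $e^{-C(R)}\exp(-\mu(x,2x))\leq \exp(-\mu(x,2x+R))\leq \exp(-\mu(x,2x))$. Integrating against $\mu$ shows that $\int_0^\infty \exp(-\mu(x,2x+R))\mu(\d x)=\infty$ for every $R\geq 0$ if and only if $\int_0^\infty \exp(-\mu(x,2x))\mu(\d x)=\infty$. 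The same argument on the negative half-line, using the bound $\mu(-y-R,-y)<C(R)$ applied at $-y=2x\leq 0$, yields $\mu(2x,x)\leq \mu(2x-R,x)\leq \mu(2x,x)+C(R)$ for $x\leq 0$, and hence the analogous equivalence for the left-hand integral.

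With these comparisons in place, both halves of the lemma follow from Theorem \ref{condition}. If both $R=0$ integrals diverge, then so do the integrals $\int_0^\infty \exp(-\mu(x,2x+R))\mu(\d x)$ and $\int_{-\infty}^0 \exp(-\mu(2x-R,x))\mu(\d x)$ for every $R\geq 0$, so {\tt GWIPP}$(\mu)$ is recurrent with probability $1$. Conversely, if one of the $R=0$ integrals is finite, then applying Theorem \ref{condition} with $R=0$ directly gives transience with probability $1$.

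No serious obstacle arises; the only thing to be careful with is verifying that the boundedness constants $C(R)$ furnished by the definition of $\cM_b$ can indeed be applied at the endpoints $2x$ (for $x\geq 0$) and $2x$ (for $x\leq 0$), which is immediate since these endpoints lie in the half-lines $[0,\infty)$ and $(-\infty,0]$ respectively. Comparing with Remark \ref{peaks}, the role of the boundedness hypothesis is precisely to rule out intensity profiles whose ``peaks'' can sit just outside the interval $(x,2x)$ and invalidate the equivalence between $A_k^0$ and $A_k^R$.
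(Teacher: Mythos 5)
Your proof is correct and follows essentially the same route as the paper: both use the $\cM_b$ bound $\mu(2x,2x+R)<C(R)$ to sandwich $\exp(-\mu(x,2x+R))$ between $e^{-C(R)}\exp(-\mu(x,2x))$ and $\exp(-\mu(x,2x))$, and then conclude via Theorem \ref{condition}. No issues.
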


\proof
Fix $R>0$. We have
\begin{align*}
 \exp(-C)\int_0^\infty \exp(-\mu(x,2x))\mu(\d x)
&\leq \int_0^\infty \exp(-\mu(x,2x)-\mu(2x,2x+R))\mu(\d x) \\
& = \int_0^\infty \exp(-\mu(x,2x+R))\mu(\d x) \\ 
& \leq \int_0^\infty \exp(-\mu(x,2x))\mu(\d x).
\end{align*}
The integral on the negative half-line can be similarly bounded. 
Therefore the integrals in the statement of the lemma diverge if and only if the corresponding integrals in Theorem \ref{condition} diverge. 
This proves the claim.
\qed

For instance, if $\mu\in \cM$ and the maps $x\mapsto \mu(0,x)$ and $x \mapsto \mu(-x,0)$ from $[0,\infty)$ to $[0,\infty)$ 
are Lipschitz, then $\mu\in \cM_b$.
Also, $\limsup_{t\rightarrow \pm \infty}\lambda(t)<\infty$ implies that $\lambda\in \cM_b$, which gives the following corollary. 

\begin{corollary}\label{cor:intcond}
 Suppose $\lambda\in \cM$ and $\limsup_{t\rightarrow \pm \infty}\lambda(t)<\infty$. Then {\tt GWIPP}$(\lambda)$ is recurrent with probability $1$ if 
\begin{align}
 \int_0^{\infty}\!\! \exp\left( - \int_x^{2x} \lambda(t)\d t \right) \lambda(x) \d x = \infty
\quad \text { and } \quad  \int_{-\infty}^0 \!\! \exp\left( - \int_{2x}^{x} \lambda(t)\d t \right) \lambda(x) \d x = \infty.
\end{align}
If either integral is finite, then {\tt GWIPP}$(\lambda)$ is transient with probability $1$.
\end{corollary}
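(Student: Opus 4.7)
The plan is to show that the hypothesis $\limsup_{t\to\pm\infty}\lambda(t)<\infty$ implies that the measure $\mu$ induced by $\lambda$ lies in $\cM_b$, and then invoke the preceding lemma directly. Once $\lambda\in \cM_b$ is established, the integral conditions in the corollary are exactly the conditions of the preceding lemma rewritten in terms of the intensity, so the conclusion transfers verbatim.

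To verify $\lambda \in \cM_b$, fix $R>0$. By hypothesis there exist constants $M_+,M_-<\infty$ and a threshold $T>0$ such that $\lambda(t)\leq M_+$ for all $t\geq T$ and $\lambda(t)\leq M_-$ for all $t\leq -T$. I would then split into two regimes. For $x\geq T$, the estimate
\begin{align}
\mu(x,x+R) = \int_x^{x+R}\lambda(t)\,\d t \leq M_+ R
\end{align}
is immediate. For $0\leq x < T$, the interval $(x,x+R)$ is contained in the bounded set $(0,T+R)$, and condition (ii) in the definition of $\cM$ gives $\mu(x,x+R)\leq \mu(0,T+R)<\infty$. Thus
\begin{align}
\sup_{x\geq 0}\mu(x,x+R) \leq \max\{M_+ R,\ \mu(0,T+R)\} < \infty.
\end{align}
The symmetric argument handles $\sup_{x\geq 0}\mu(-x-R,-x)$ using $M_-$ and $\mu(-T-R,0)$. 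This shows $\lambda \in \cM_b$ with $C(R)$ taken to be the maximum of the four bounds above.

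With $\lambda\in \cM_b$ in hand, the proof concludes by quoting the preceding lemma: the integrals
\begin{align}
\int_0^{\infty}\exp\!\Bigl(-\!\int_x^{2x}\lambda(t)\,\d t\Bigr)\lambda(x)\,\d x \quad\text{and}\quad \int_{-\infty}^0 \exp\!\Bigl(-\!\int_{2x}^x \lambda(t)\,\d t\Bigr)\lambda(x)\,\d x
\end{align}
are precisely $\int_0^\infty\exp(-\mu(x,2x))\,\mu(\d x)$ and $\int_{-\infty}^0\exp(-\mu(2x,x))\,\mu(\d x)$, so their divergence is equivalent to recurrence and their convergence (of either one) to transience.

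There is no real obstacle here; the only mildly delicate point is that $\lambda$ itself need not be bounded on all of $\R$ (it can blow up on the bounded region where the $\limsup$ hypothesis says nothing), and the argument has to handle that bounded region separately using condition (ii) of $\cM$ rather than a pointwise bound on $\lambda$. Once that case split is made explicit, the rest of the argument is pure bookkeeping plus one application of the previous lemma.
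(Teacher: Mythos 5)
Your argument is correct and is exactly the route the paper takes: the paper observes (in one line, just before the corollary) that $\limsup_{t\to\pm\infty}\lambda(t)<\infty$ implies $\lambda\in\cM_b$ and then invokes the preceding lemma, which is precisely your two steps. Your write-up merely makes explicit the easy verification of the $\cM_b$ condition, including the correct handling of the bounded region via condition (ii), so there is nothing to add.
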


Next we prove Lemma \ref{lem:coupling}.
\begin{proof}[Proof of Lemma \ref{lem:coupling}]
Denote by $\Pi$ the point process with mean measure $\mu$ and let $(S_n)_{n=0}^{\infty}$ be {\tt GWIPP}$(\Pi)$. 
Similarly, denote by $\Pi'$ the point process with mean measure $\mu'$ and let $(S'_n)_{n=0}^{\infty}$ be {\tt GWIPP}$(\Pi')$. 
Denote the points of $\Pi$ and $\Pi'$ 
by 
\begin{align}
\cdots < X_{-2}<X_{-1}<0<X_1<X_2< \cdots  \text{ and } \cdots < X'_{-2}<X'_{-1}<0<X'_1<X'_2< \cdots
\end{align}
respectively.
 Since $\mu'(A)\geq \mu(A)$ for all measurable $A\subset (-\infty,-K)\cup (K,\infty)$,
 we can couple $\Pi$ and $\Pi'$ together so that 
$x\in ((-\infty,-K)\cup (K,\infty)) \cap \Pi$ implies that $x\in \Pi'$.

Assume, for contradiction, that {\tt GWIPP}$(\Pi')$ is recurrent and {\tt GWIPP}$(\Pi)$ is transient. 
Without loss of generality, we may assume that $S_n\to \infty$ as $n\to \infty$.
Then there is some $M_0\geq 1$ 
such that $S_{k+1}>S_k$ for all $k>M_0$, i.e.\ $(S_n)_{n=1}^{\infty}$ moves only to the right after time $M_0$. 
Assume moreover that $M_0$ is large enough that $S_{M_0}>K$, so that we are on the region where $\Pi$ and $\Pi'$ are coupled.

For the remainder of the proof, see Figure \ref{fig:coupling} for an illustration.
Let $Y=\max\{X\in\Pi:X<\min_{0\leq k\leq M_0}{S_k}\}$, that is, let $Y$ be the rightmost point of $\Pi$ that is never visited.
Note that $Y$ is well-defined 
because of the transience of {\tt GWIPP}$(\Pi)$ and the assumption $S_n\to \infty$ as $n\to \infty$.

Since {\tt GWIPP}$(\Pi')$ is recurrent, $(S_n')_{n=1}^{\infty}$ visits all points of $(K,\infty)\cap \Pi\subseteq (K,\infty)\cap \Pi'$ and jumps over $0$ infinitely often.
Thus we can find $J\geq 1$ such that $S_J'>S_{M_0}$ and $S'_{J+1}<Y$.
Let $S_J'=X_k'$ and let $\ell$ be such that $X_{\ell}\leq X_k'< X_{\ell+1}$. 
Moreover, let $M>M_0$ be such that
$S_M=X_{\ell}$ and $S_{M+1}=X_{\ell+1}$. 

\begin{figure}[ht]
 \includegraphics[width=0.9\textwidth]{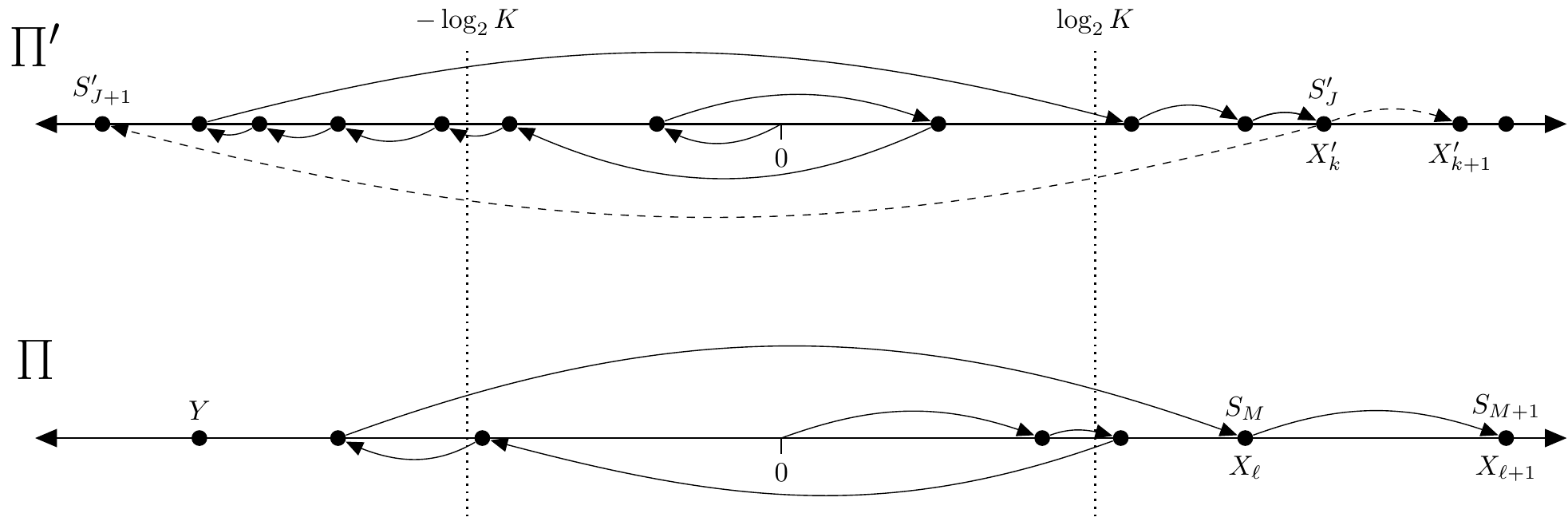}
\caption{An illustration of the proof of Lemma \ref{lem:coupling}. 
Note that both the positive and negative axis have been rescaled logarithmically. 
The proof shows that $S_{J+1}'=X_{k+1}'$ is forced, which contradicts the choice of $J$ 
(which implies that $S_{J+1}'<0$).}
\label{fig:coupling}
\end{figure}

The coupling between $\Pi$ and $\Pi'$ on $(K,\infty)$ implies that $X_{\ell}\leq S_J'<X'_{k+1}\leq X_{\ell+1}$.
Therefore $d(S_J',X'_{k+1})\leq d(S_{M},S_{M+1})<d(S_M,Y)\leq d(S'_{J},S_{J+1}')$,
which contradicts the choice of $J$, that is $S'_{J+1}<Y$. 
Thus, if {\tt GWIPP}($\Pi')$ is recurrent, so is {\tt GWIPP}$(\Pi)$.
\end{proof}

A question that complements Lemma \ref{lem:coupling} is the following. Suppose {\tt GWIPP}$(\lambda')$ is recurrent and let $\lambda=\lambda'+\lambda_0$ for some intensity function $\lambda_0$. Which conditions on $\lambda_0$ should one place to ensure that {\tt GWIPP}$(\lambda)$ is also recurrent? That is, how many points, and where, can we add to a recurrent process without making it transient? 
The following lemma yields a partial answer to this.

\begin{lemma}\label{lem:addpoints}
Suppose {\tt GWIPP}$(\lambda)$ is recurrent with probability $1$. If, for all $R\geq 0$,
\begin{align}
 \limsup_{x\to \infty}\int_{x}^{2x+R}\max \left(0, \lambda'(t)-\lambda(t)\right)\d t <\infty,
\end{align}
and 
\begin{align}
 \limsup_{x\to -\infty}\int_{2x-R}^{x}\max \left(0, \lambda'(t)-\lambda(t)\right)\d t <\infty,
\end{align}
then {\tt GWIPP}$(\lambda')$ is recurrent with probability $1$.  
\end{lemma}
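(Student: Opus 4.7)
\emph{Plan.} The idea is to reduce to Theorem \ref{condition} applied to a carefully chosen intensity that dominates both $\lambda$ and $\lambda'$, and then descend to $\lambda'$ via Lemma \ref{lem:coupling}. Define $\tilde\lambda(t) := \max\{\lambda(t),\lambda'(t)\} = \lambda(t) + \max\{0,\lambda'(t)-\lambda(t)\}$ and let $\tilde\mu$ denote the associated measure. One first checks that $\tilde\lambda\in\cM$: condition (i) is inherited from $\lambda$ since $\tilde\mu\geq\mu$, and condition (ii) follows from the pointwise bound $\tilde\mu(A)\leq\mu(A)+\mu'(A)<\infty$ on any bounded $A$. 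Since $\tilde\lambda\geq\lambda'$ pointwise, Lemma \ref{lem:coupling} (for any $K>0$) reduces the problem to showing that {\tt GWIPP}$(\tilde\lambda)$ is recurrent.

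To verify this, fix $R\geq 0$. The first hypothesis provides $x_0=x_0(R)$ and $C=C(R)<\infty$ with $\int_x^{2x+R}\max\{0,\lambda'(t)-\lambda(t)\}\,\d t\leq C$ for all $x\geq x_0$, so that $\tilde\mu(x,2x+R)\leq\mu(x,2x+R)+C$ on this range. Using in addition $\tilde\mu\geq\mu$ as measures,
\[
\int_0^\infty e^{-\tilde\mu(x,2x+R)}\,\tilde\mu(\d x)\;\geq\;e^{-C}\int_{x_0}^\infty e^{-\mu(x,2x+R)}\,\mu(\d x).
\]
Since {\tt GWIPP}$(\lambda)$ is recurrent, Theorem \ref{condition} gives $\int_0^\infty e^{-\mu(x,2x+R)}\mu(\d x)=\infty$, and because $\mu$ is locally finite the piece on $[0,x_0]$ is finite, so the right-hand side diverges. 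The second hypothesis handles the negative half-line symmetrically. Applying Theorem \ref{condition} to $\tilde\mu$ then yields recurrence of {\tt GWIPP}$(\tilde\lambda)$ with probability $1$, and Lemma \ref{lem:coupling} transfers this conclusion to {\tt GWIPP}$(\lambda')$.

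There is no real obstacle here; everything reduces to a direct application of the two earlier tools once the dominating intensity $\tilde\lambda$ is introduced. The only point worth emphasising is that the $\limsup$ hypotheses provide exactly the right amount of control: they let the extra mass contributed by $\lambda'-\lambda$ inside the interval $(x,2x+R)$ be absorbed into a multiplicative factor $e^{-C(R)}$, which does not destroy the divergence coming from the recurrence of $\lambda$.
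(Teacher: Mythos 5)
Your proposal is correct and follows essentially the same route as the paper's proof: both introduce the dominating intensity $\max(\lambda,\lambda')$, show its recurrence via Theorem \ref{condition} by absorbing the extra mass on $(x,2x+R)$ into a factor $e^{-C(R)}$, and then descend to $\lambda'$ by Lemma \ref{lem:coupling}. The only (harmless) addition is your explicit check that the dominating intensity lies in $\cM$, which the paper leaves implicit.
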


\begin{proof}
For all $t\in \R$, let $\lambda''(t) = \max(\lambda(t),\lambda'(t))$. It suffices to show that {\tt GWIPP}($\lambda''$) is recurrent, since then, by Lemma \ref{lem:coupling}, {\tt GWIPP}$(\lambda')$ also is recurrent.
The first condition implies that for any $R\geq 0$ there exists some $C>0$ such that
\begin{align}
\int_{x}^{2x+R}( \lambda''(t)-\lambda(t))\d t < C
\end{align}
for all all large enough $x>0$. Therefore
\begin{align}
 \int_{x}^{2x+R}\lambda''(t)\d t = \int_x^{2x+R}\lambda(t) \d t + \int_{x}^{2x+R}(\lambda''(t)-\lambda(t))\d t < \int_x^{2x+R}\lambda(t) \d t + C
\end{align}
for large enough $x$. For $M$ large enough,
\begin{align}
 \int_{M}^{\infty} \exp\left(-\int_x^{2x+R}\lambda''(t) \d t \right) \lambda''(x) \d x 
& \geq \int_{M}^{\infty} \exp\left(-\int_x^{2x+R}\lambda''(t) \d t \right) \lambda(x) \d x \\
& \geq \int_M^{\infty}\exp\left(-\int_x^{2x+R}\lambda(t) \d t -C \right) \lambda(x) \d x \\
& = \exp(-C) \int_M^{\infty}\exp\left(-\int_x^{2x+R}\lambda(t) \d t  \right) \lambda(x) \d x \\
&=\infty,
\end{align}
where the final equality follow from the fact that {\tt GWIPP}$(\lambda)$ is recurrent
and Theorem \ref{condition}. The integral on the negative half-line can be handled similarly. 
Therefore, by Theorem \ref{condition},  {\tt GWIPP}($\lambda''$) is recurrent.
\end{proof}

\section{Treshold results}\label{sec:3}
In this section we study the threshold between transience and recurrence, proving Propositions \ref{prop:thresholdregion}--\ref{thm:threshold} and related results. We focus on symmetric intensity functions of the form 
\begin{equation}
 \label{lambda}
 \lambda_f(t)=\frac{\log f(\vert t\vert)}{\vert t\vert\log 2},
\end{equation}
where $f:(0,\infty)\rightarrow [1,\infty)$ is a regularly varying function with non-negative index $\beta$, meaning that $\lim_{t\to \infty}f(at)/f(t)=a^{\beta}$ for any $a\geq 0$. If $\beta=0$, then $f$ is said to be slowly varying.
For a thorough introduction to the theory of regular variation, we refer the reader to \cite{book:regularvariation}.

Let $\cM_s$ be the set of all intensity functions $\lambda_f\in \cM$ such that $f$ is a regularly varying function with index $\beta\geq 0$.
One can show that $\cM_s\subset \cM_b\subset \cM$, so  we may apply all results developed in Section \ref{sec:2}.
The intensity functions in $\cM_s$ are symmetric about $0$, so it suffices to only look at the positive half-line. 
However, the results in this section can easily be adapted to the case when $\lambda$ is not assumed to be symmetric.

We use the following standard notation. If $f,g:\R \to \R$ are two functions and there exists $C> 0$ such that $|f(x)|\leq C|g(x)|$ for all large enough $x$, then we write $f(x)=O(g(x))$. If $f(x)=O(g(x))$ and $g(x)=O(f(x))$, then we write $f(x)=\Theta(g(x))$.

 \begin{lemma}[{\cite[Theorem 1.5.2]{book:regularvariation}}]\label{lem:uniconv}
 If $A\subseteq (0,\infty)$ is a compact set and $f:(0,\infty)\to [0,\infty)$ is regularly varying with index $\beta$, then $f(ax)/f(x)\to a^{\beta}$ as $x\to \infty$, uniformly for all $a\in A$.
\end{lemma}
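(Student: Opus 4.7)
The plan is to reduce the regularly varying case to the slowly varying one and then execute a Steinhaus-type measure-theoretic argument on appropriately defined ``good'' dilations.

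First I would factor $f(x) = x^{\beta} L(x)$, where $L(x) := f(x)/x^{\beta}$ is slowly varying (a direct check from $f(ax)/f(x)\to a^{\beta}$). Then $f(ax)/f(x) = a^{\beta}\cdot L(ax)/L(x)$, and since $a\mapsto a^{\beta}$ is uniformly continuous on any compact $A\subseteq(0,\infty)$, it suffices to prove uniform convergence $L(ax)/L(x)\to 1$ on compact $A$ for slowly varying $L$. Any compact $A$ lies in some $[b^{-1},b]$ with $b>1$, and since $L(x/a)/L(x)=[L(a\cdot (x/a))/L(x/a)]^{-1}\to 1$, we may restrict to $a\in[1,b]$. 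Passing to additive variables via $h(t) := \log L(e^t)$ and $u := \log a$, the hypothesis becomes $h(t+u)-h(t)\to 0$ pointwise as $t\to\infty$ for each fixed $u\in\R$, and the goal becomes uniformity in $u\in[0,U]$, where $U := \log b$.

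For the Steinhaus step, fix $\epsilon>0$ and set
\begin{equation}
E_N := \{u\in[0,2U] : |h(t+u)-h(t)|\leq \epsilon \text{ for all } t\geq N\}.
\end{equation}
Granting measurability of $E_N$ (addressed below), the sets $E_N$ are nested increasingly with union $[0,2U]$ by the pointwise hypothesis, so there is $N_0$ with $|E_{N_0}|>3U/2$. For any $u\in[0,U]$, the translate $u+E_{N_0}\subseteq[0,3U]$ has the same measure $>3U/2$, forcing $E_{N_0}\cap(u+E_{N_0})\neq\emptyset$. Picking $v$ in this intersection gives $v,\,v-u\in E_{N_0}$. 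Writing
\begin{equation}
h(t+u)-h(t) = [h(t+v)-h(t)] - [h((t+u)+(v-u))-h(t+u)],
\end{equation}
both bracketed differences have absolute value at most $\epsilon$ for $t\geq N_0$: the first because $v\in E_{N_0}$, the second because $v-u\in E_{N_0}$ and $t+u\geq N_0$. Therefore $|h(t+u)-h(t)|\leq 2\epsilon$ uniformly in $u\in[0,U]$, giving the required uniform convergence. Extending uniformity from $[0,U]$ to $[-U,0]$ is routine (apply the above with $u'=-u$ at $t-u'$ in place of $t$), and passing back from $h$ to $L$ to $f$ is immediate.

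The main obstacle I expect is establishing measurability of $E_N$, since it is defined as an uncountable intersection over $t\geq N$ of measurable sections in $u$. The cleanest route is to observe that $H(t,u) := h(t+u)-h(t)$ is Borel on $\R^2$ (measurability is inherited from $f$) and to argue that one may replace the condition ``for all $t\geq N$'' by the condition ``for all $t\in\Q\cap[N,\infty)$'' after first smoothing $h$ slightly by convolving with an indicator of a short interval, which renders $h$ continuous in $t$ at the price of a harmless additional error in $\epsilon$; this gives a countable intersection and restores measurability. An alternative is to replace the Steinhaus argument by the Baire category version, which bypasses the measurability issue by working with a Borel-measurable $h$ modulo a meager set. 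Either route adds technical bookkeeping but no essentially new idea.
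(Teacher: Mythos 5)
The paper does not actually prove this lemma; it is quoted from Bingham--Goldie--Teugels (their Theorem~1.5.2, the Uniform Convergence Theorem), so your attempt can only be measured against the classical proof of that result. Your reduction to the slowly varying case, the passage to additive variables, and the two-translate Steinhaus step are exactly that classical argument, and that part of your write-up is sound.

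There is, however, a genuine gap at the point you yourself flag, and neither of your proposed repairs closes it. The sets $E_N$ are uncountable intersections of measurable sets and need not be measurable. Convolving $h$ with an indicator is circular: to show that the mollification $\tilde h$ stays within $\epsilon$ of $h$ (or even that $\tilde h(t+u)-\tilde h(t)\to 0$ with a usable dominating function) you need precisely the local uniformity in $t$ that the lemma asserts, and a slowly varying $h$ need not be locally bounded or locally integrable before the theorem is proved. Restricting to rational $t\ge N$ alone also fails, since in your final identity the second bracket is evaluated at $t+u$ for arbitrary real $u$. The Baire-category variant proves a statement about functions with the Baire property, not the Lebesgue-measurable case. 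Note that measurability must enter essentially somewhere, because the statement is false for general $f$ (Hamel-basis counterexamples), yet as written your argument never actually uses it. The standard repair (Csisz\'ar--Erd\H{o}s, as in BGT Theorem~1.2.1) avoids the uncountable intersection: suppose uniformity fails along $t_n\to\infty$, $u_n\in[0,U]$ with $|h(t_n+u_n)-h(t_n)|\ge 2\epsilon$; set $V_n=\{s\in[0,2U]:|h(t_n+s)-h(t_n)|\ge\epsilon\}$ and $W_n=\{s\in[0,2U]:|h(t_n+u_n+s)-h(t_n+u_n)|\ge\epsilon\}$, which are manifestly measurable; deduce $|V_n|,|W_n|\to 0$ by bounded convergence using the pointwise hypothesis along the sequences $t_n$ and $t_n+u_n$; and then run your intersection argument on $[0,2U]\setminus V_n$ and $u_n+([0,2U]\setminus W_n)$ inside $[0,3U]$ to reach a contradiction. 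With that substitution your proof becomes the standard one; you should also record that the lemma implicitly assumes $f$ measurable and eventually positive, as the cited source does.
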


\begin{lemma}\label{lem:regvar}
   Suppose $\lambda_f\in \cM_s$. Then {\tt GWIPP}$(\lambda_f)$ is recurrent with probability $1$ if  
 \begin{align}
  \int_0^{\infty}\frac{\log f(x)}{xf(x)}\d x=\infty.
 \end{align}
 If the integral is finite, then {\tt GWIPP}$(\lambda_f)$ is transient with probability $1$.
 \end{lemma}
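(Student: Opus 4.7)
The plan is to apply Corollary \ref{cor:intcond}. Since $\cM_s \subset \cM_b$ and the representation $f(t) = t^\beta L(t)$ with $L$ slowly varying gives $\log f(t) = O(\log t)$ and hence $\lambda_f(t) = O((\log t)/t) \to 0$, the hypothesis $\limsup_{t \to \pm\infty}\lambda_f(t) < \infty$ of Corollary \ref{cor:intcond} is satisfied. By the symmetry of $\lambda_f$ about $0$, the claim reduces to comparing
\begin{equation*}
\int^\infty \exp\!\left(-\int_x^{2x}\lambda_f(t)\, dt\right)\lambda_f(x)\, dx \quad \text{with} \quad \int^\infty \frac{\log f(x)}{x f(x)}\, dx.
\end{equation*}

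The heart of the argument is the asymptotic
\begin{equation*}
\int_x^{2x}\lambda_f(t)\, dt = \log f(x) + \tfrac{1}{2}\beta\log 2 + o(1), \qquad x \to \infty.
\end{equation*}
After the substitution $t = xu$, the left-hand side becomes $\log f(x) + (\log 2)^{-1}\int_1^2 \log(f(xu)/f(x))\, du/u$, and the uniform convergence theorem for regularly varying functions (Lemma \ref{lem:uniconv}) implies that $\log(f(xu)/f(x)) \to \beta \log u$ uniformly for $u \in [1,2]$, which yields the claimed formula. Exponentiating and multiplying by $\lambda_f(x)$,
\begin{equation*}
\exp\!\left(-\int_x^{2x}\lambda_f(t)\, dt\right)\lambda_f(x) = \Theta\!\left(\frac{\log f(x)}{x f(x)}\right)
\end{equation*}
as $x \to \infty$, so the two tail integrals above converge or diverge simultaneously. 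An identical computation on the negative half-line, combined with Corollary \ref{cor:intcond}, then delivers both directions of the lemma.

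The main obstacle is the passage to the limit inside the correction integral $\int_1^2 \log(f(xu)/f(x))\, du/u$: mere pointwise convergence of $\log(f(xu)/f(x)) \to \beta \log u$ would not justify interchanging the limit with the integral, and without a uniform bound one could not conclude that the correction stays $O(1)$. It is precisely the uniform convergence supplied by Lemma \ref{lem:uniconv} that pins the correction down to a bounded quantity (in fact a finite limit), yielding the sharp $\Theta$ comparison. Everything else amounts to routine manipulation and an invocation of Corollary \ref{cor:intcond}.
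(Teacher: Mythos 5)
Your proof is correct and follows essentially the same route as the paper: both use the uniform convergence theorem (Lemma \ref{lem:uniconv}) to show $\int_x^{2x}\lambda_f(t)\d t = \log f(x)+O(1)$ and then conclude via Corollary \ref{cor:intcond}. Your version is slightly more explicit, identifying the $O(1)$ term as $\tfrac{1}{2}\beta\log 2+o(1)$ and verifying the boundedness hypothesis of the corollary, which the paper leaves implicit.
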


\begin{proof}
The set $[1,2]$ is compact and $f$ is regularly varying. It follows by Lemma \ref{lem:uniconv}, that
\begin{align}
 \int_x^{2x}\frac{\log f(t)}{t \log 2}\d t = \log f(x) + O(1).
\end{align}
Hence
\begin{align}
\int_0^{\infty} \exp\left(-\int_{x}^{2x}\lambda_f(t)\d t \right) \lambda_f(x) \d x  = \int_0^{\infty} \frac{\Theta(1)}{f(x)}\frac{\log f(x)}{x\log 2} \d x
 = \Theta(1)\int_0^{\infty}\frac{\log f(x)}{xf(x)}\d x.
\end{align}
The claim follows from Corollary \ref{cor:intcond}.
\end{proof}

 The next corollary states that if $f$ is regularly varying with positive index, then we obtain a transitive processes with probability $1$. 

\begin{corollary}
\label{reg_varying}

Let $f$ be a regularly varying function with index $\beta>0$. Then {\tt GWIPP}$(\lambda_f)$ is transient with probability $1$. 
\end{corollary}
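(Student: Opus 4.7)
The plan is to apply Lemma \ref{lem:regvar} directly: it suffices to show that
$\int_0^\infty \frac{\log f(x)}{x f(x)}\,\d x < \infty$ whenever $f$ is regularly varying with index $\beta > 0$, because that convergence forces transience.

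First I would write $f(x)=x^\beta L(x)$ with $L$ slowly varying, so that $\log f(x) = \beta \log x + \log L(x)$. Then I would invoke the standard Potter-type bounds for regularly varying functions (a direct consequence of Lemma \ref{lem:uniconv} applied on the compact set $[1,2]$, or \cite[Theorem 1.5.6]{book:regularvariation}): for every $\epsilon > 0$ there exists $x_0 = x_0(\epsilon)$ such that $f(x) \geq x^{\beta-\epsilon}$ and $L(x) \leq x^{\epsilon}$ for all $x \geq x_0$. Choosing $\epsilon = \beta/2$ yields $f(x) \geq x^{\beta/2}$ and $\log f(x) \leq (3\beta/2)\log x$ for all sufficiently large $x$.

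Combining these bounds, for $x$ large enough,
\begin{equation}
\frac{\log f(x)}{x f(x)} \leq \frac{(3\beta/2)\log x}{x\cdot x^{\beta/2}} = \frac{(3\beta/2)\log x}{x^{1+\beta/2}},
\end{equation}
and the right-hand side is integrable at $+\infty$. For the behaviour near the origin, the hypothesis $\lambda_f \in \cM_s \subseteq \cM$ ensures that $\lambda_f(x) = \log f(x)/(x\log 2)$ is integrable on any bounded interval, and since $f \geq 1$ we have $\log f(x)/(x f(x)) \leq \log f(x)/x$, which is therefore integrable near $0$ as well. Adding the two pieces gives convergence of the full integral, so Lemma \ref{lem:regvar} delivers transience with probability $1$.

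The only real subtlety is justifying the polynomial lower bound on $f$ and upper bound on $\log f$; both follow immediately from standard slow-variation estimates, so no serious obstacle is anticipated.
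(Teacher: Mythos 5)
Your proposal is correct and follows essentially the same route as the paper: both reduce the claim to Lemma \ref{lem:regvar} via the decomposition $f(x)=x^{\beta}\ell(x)$ and then check convergence of $\int_0^\infty \frac{\log f(x)}{xf(x)}\d x$. The only difference is cosmetic: you make the integrability estimate explicit through Potter-type bounds (and treat the origin separately), whereas the paper simply observes that $\log f(x)/\ell(x)$ is slowly varying so that the integrand is $L(x)/x^{1+\beta}$ with $L$ slowly varying, which is integrable for $\beta>0$.
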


\begin{proof}
There exists a slowly varying function $\ell(x)$ such that $f(x)=x^{\beta}\ell(x)$  (see, e.g.\ \cite[Theorem 1.4.1]{book:regularvariation}).
Then for $x>0$,
\begin{align}
 \frac{\log f(x)}{xf(x)} = \frac{\log f(x)}{x^{1+\beta}\ell(x)}=\frac{L(x)}{x^{1+\beta}}, \label{eq:slow_var}
\end{align}
where $L(x)=\frac{\log f(x)}{\ell(x)}$ is a slowly varying function (see, e.g. \cite[Theorem 1.3.6]{book:regularvariation}). 
The function on the right hand side of \eqref{eq:slow_var} is integrable on $(0,\infty)$ whenever $\beta>0$, and by Lemma \ref{lem:regvar}, {\tt GWIPP}$(\lambda_f)$ is transient. 
\end{proof}

The intensity functions in Proposition \ref{prop:thresholdregion} lie in $\cM_s$ with $f$ slowly varying, showing that the transition between recurrence and transience occurs inside the subclass of $\cM_s$ for which $f$ is slowly varying.

\begin{proof}[Proof of Proposition \ref{prop:thresholdregion}]
Let  
\begin{align}
 f(t) =\prod_{i=2}^n (\log^{(i-1)} t )^{a_i} ,
\end{align}
so that $\lambda(t)=\frac{\log f(t)}{t\log 2}$. Note that $\lambda\in \cM_s$. Assume first that $a_2>0$. Then
\begin{align}
\int \frac{\log f(x)}{xf(x)} \d x = \int \frac{\sum_{i=2}^n a_i \log^{(i)} x }{x \prod_{i=2}^n (\log^{(i-1)} x)^{a_i}} \d x = \Theta \left( \int \frac{\log^{(2)} x}{x \prod_{i=2}^{n}(\log^{(i-1)} x)^{a_i}}\d x \right),
\end{align}
where the final integral is the leading order term of the sum. The final integral is convergent precisely when one of the conditions in the statement is satisfied. (This is seen by repeatedly using the change of variables $x\mapsto e^x$.) By Lemma \ref{lem:regvar}, the statement follows.
If $a_2=0$, then consider instead $\lambda'(t):=\lambda(t)+\frac{1}{2}\frac{\log^{(2)} |t|}{|t|\log 2}$ and use the above along with Lemma \ref{lem:coupling} to conclude that {\tt GWIPP}$(\lambda)$ is recurrent in this case. This completes the proof.
\end{proof}

\begin{proof}[Proof of Proposition \ref{thm:threshold}]
 Suppose first that
\begin{align}
 \liminf_{t\to \infty} \frac{t\lambda(t)\log 2 - \sum_{i=2}^{n-1}a_i\log^{(i)}t}{a_n\log^{(n)}t}>1 
\end{align}
 for some $n\geq 2$. (Let $n$ be minimal with this property.) Let
\begin{align}
 a:=\frac{1}{2}\left(1+\liminf_{t\to \infty} \frac{t\lambda(t)\log 2 - \sum_{i=2}^{n-1}a_i\log^{(i)}t}{a_n\log^{(n)}t} \right) > 1
\end{align}
and define
\begin{align}
 \lambda'(t) := \begin{cases} \lambda(t), & t\leq 0 \\ \frac{1}{|t|\log 2}\left(\sum_{i=2}^{n-1} a_i\log^{(i)}|t|+aa_n\log^{(n)}|t| \right), & t>0.
 \end{cases} 
\end{align}
Suppose, for contradiction, that {\tt GWIPP}$(\lambda)$ is recurrent. Since $\lambda(t)\geq \lambda'(t)$ for all $t$ large enough, Lemma \ref{lem:coupling} implies that {\tt GWIPP}$(\lambda')$ is recurrent. However, Proposition \ref{prop:thresholdregion} implies that {\tt GWIPP}$(\lambda')$ is transient. (In Proposition \ref{prop:thresholdregion} we assumed that the intensity function be symmetric, but this does not change the evaluation of the integral on the positive half-axis.) This is a contradiction, so {\tt GWIPP}$(\lambda)$ must be transient.

Now suppose the second condition holds for some $n\geq 2$. If 
\begin{align}
\lambda'(t) := \frac{1}{|t|\log 2}\left(\sum_{i=2}^{n}a_i \log ^{(i)}|t| \right),
\end{align}
then $\lambda(t)<\lambda'(t)$ for all sufficiently large $t$. By Proposition \ref{prop:thresholdregion}, {\tt GWIPP}$(\lambda')$ is recurrent, and Lemma \ref{lem:coupling} implies that {\tt GWIPP}$(\lambda)$ is recurrent.
\end{proof}

\begin{proof}[Proof of Proposition \ref{prop:slowerthanlog2}] 
For $t>0$ we have $\lambda(t)=\frac{\log f(t)}{t \log 2}$ with 
\begin{align}
 \log f(t)=\sum_{i=2}^{\infty}a_i\log^{(i)}(t)+\log g(t).
\end{align}
Because of our definition of the iterated logarithm, this implies that
\begin{align}
 f(t)=\prod_{i=1}^{\infty}\left(\max( 1,(\log^{(i)} t)^{a_{i+1}})\right) g(t).
\end{align}
Since $b_{n-1}\leq g(x)$ for any $e\uparrow \uparrow (n-1)\leq x \leq e\uparrow \uparrow n$, we obtain
\begin{align}
 \int_0^{\infty} \frac{\log f(x)}{xf(x)} \d x 
& = \sum_{n=2}^{\infty}\int_{e\uparrow \uparrow (n-1)}^{e\uparrow \uparrow n} \frac{\sum_{i=2}^{n}a_i \log ^{(i)} x+ \log g(x)}{x\prod_{i=1}^{n-1}(\log^{(i)} x)^{a_{i+1}}g(x)} \d x \\
&=\Theta(1) \sum_{n=2}^{\infty}\int_{e\uparrow \uparrow (n-1)}^{e\uparrow \uparrow n} \frac{\log ^{(2)} x}{x\prod_{i=1}^{n-1}(\log^{(i)} x)^{a_{i+1}}g(x)} \d x \\
&\leq \Theta(1) \sum_{n=2}^{\infty} \int_{e\uparrow \uparrow (n-1)}^{e\uparrow \uparrow n}\frac{1}{x\prod_{i=1}^{n-1}(\log^{(i)}x) b_{n-1}} \d x  \\
&=\Theta(1)\sum_{n=2}^{\infty}\frac{1}{b_{n-1}}\left[\log^{(n)}x\right]_{e\uparrow \uparrow (n-1)}^{e\uparrow \uparrow n} \\
&=\Theta(1)\sum_{n=2}^{\infty}\frac{1}{b_{n-1}}.
\end{align}
Using instead the bound $b_{n}\geq g(x)$ for any $e\uparrow \uparrow (n-1)\leq x \leq e\uparrow \uparrow n$, we arrive at
\begin{align}
 \sum_{n=2}^{\infty}\frac{1}{b_n}\leq \int_0^{\infty}\frac{\log f(x)}{x f(x)}\d x \leq  \Theta(1)\sum_{n=2}^{\infty}\frac{1}{b_{n-1}}.
\end{align}
Applying Lemma \ref{lem:regvar} completes the proof.
\end{proof}

\begin{remark}
Proposition \ref{thm:threshold} does not answer what happens in, for example, the regime
\begin{align}
 \liminf_{t\to \infty}\frac{t \lambda(t)\log 2}{\log \log t}<1 < \limsup_{t\to \infty}\frac{t \lambda(t) \log 2}{\log \log t}.\label{eq:between}
\end{align}
\label{rem:between}
The intensity functions $\lambda_1$ and $\lambda_2$, to be defined next, both satisfy \eqref{eq:between}, but {\tt GWIPP}($\lambda_1$) is recurrent while {\tt GWIPP}($\lambda_2)$ is transient. For $t>0$ let $\lambda_1(t)= \sum_{n=1}^{\infty} \mathbbm{1}(2^{2n}<t<2^{2n+1})$ and 
$\lambda_2(t)= \sum_{n=1}^{\infty}n \mathbbm{1}(2^n-2<t<2^n-1)$ and let $\lambda_1$ and $\lambda_2$ be symmetric around 0.
We have
\begin{align}
  0=\liminf_{t\to \infty}\frac{t \lambda_1(t)}{\log \log t}<\frac{1}{\log 2} < \limsup_{t\to \infty}\frac{t \lambda_1(t)}{\log \log t}=\infty
\end{align}
and
\begin{align}
  0=\liminf_{t\to \infty}\frac{t \lambda_2(t)}{\log \log t}<\frac{1}{\log 2} < \limsup_{t\to \infty}\frac{t \lambda_2(t)}{\log \log t}=\infty.
\end{align}
It is easy to check that 
$$\int_0^\infty \exp\left(-\int_x^{2x}\lambda_1(x)\right)\lambda_1(x)\d x=\infty,$$
which together 
with the fact that $\lambda_1$ is bounded and Corollary \ref{cor:intcond}, yields that {\tt GWIPP}($\lambda_1$) is recurrent.
From Remark \ref{peaks} we know that {\tt GWIPP}($\lambda_2)$ is transient.
\end{remark}

\small

\end{document}